\newtheorem{theorem}{Theorem}[section]
\newtheorem{lemma}[theorem]{Lemma}
\newtheorem{corollary}[theorem]{Corollary}
\newtheorem{proposition}[theorem]{Proposition}
\newtheorem{example}[theorem]{Example}
\newtheorem{remark}[theorem]{Remark}
\newcommand{\N}{\mathbb{N}}
\newcommand{\IS}{\mathcal{IS}}
\renewcommand{\S}{\mathcal{S}}
\newcommand{\rank}{\mathrm{rank}}
\newcommand{\ran}{\mathrm{ran}}
\newcommand{\dom}{\mathrm{dom}}
\renewcommand{\phi}{\varphi}
\begin{document}

\title{Semitransitive and transitive subsemigroups of the  inverse symmetric semigroups
\thanks{The authors were supported in part by  Ukrainian-Slovenian bilateral
research grants from the Ministry of Education and Science,
Ukraine, and the Research Agency of the Republic of Slovenia.}}
\author{Karin Cvetko-Vah, Damjana Kokol Bukov\v{s}ek, Toma\v{z}
Ko\v{s}ir, \\ Ganna Kudryavtseva, Yaroslav Lavrenyuk and Andriy
Oliynyk}
\date{\today}
\maketitle

\begin{abstract} We classify minimal transitive subsemigroups of
the finitary inverse symmetric semigroup modulo the classification
of minimal transitive subgroups of  finite symmetric groups; and
semitransitive subsemigroups of the finite inverse symmetric
semigroup of the minimal cardinality modulo the classification of
transitive subgroups of the minimal cardinality of  finite
symmetric groups.
\end{abstract}
\section{Introduction}

An action of a semigroup $S$ on a set $X$ is said to be {\em
transitive} if for every odered pair $(x,y)$ in $X\times X$ there
is an element of $S$ that maps $x$ to $y$. Recently, a weaker
notion of semitransitivity was introduced by Rosenthal and
Troitsky \cite{RT}. An action of $S$ on $X$ is {\em
semitransitive} if for every ordered pair $(x,y)$ in $X\times X$
there is an element $\varphi$ in $S$ such that either $x=y\varphi$
or $y=x\varphi$. So far the research of semitransitivity was
mostly focused on the linear case, where $X$ is a vector space and
$S$ consists of linear maps. Both can have some additional
structure. For example, Rosenthal and Troitsky considered
subalgebras of bounded linear operators on a Banach space.
Semitransitive actions on a vector space of algebras and
semigroups of linear operators were considered in \cite{BGMRT}, of
vector spaces of linear operators in \cite{BDKKO,RaT,Bled} and of
Jordan algebras in \cite{BDKKOR}. Among other things, minimal
semitransitive algebras were characterized in \cite{RT} as those
simultaneously similar to the algebra of upper triangular Toeplitz
operators (that is, the algebra generated by the identity and a
nilpotent matrix of index $n$). Semitransitive actions of vector
spaces $S$ of linear operators on a vector space $X$ were studied
also in \cite{RaT}, where $k$-fold semitransitivity was also
considered and Jacobson's density theorem for rings was extended.
Examples of minimal semitransitive subspaces of linear maps acting
on a finite-dimensional vector space that have no trivial
invariant subspaces are given in \cite{Bled} and in \cite{BDKKO}
it is proved that if the dimensions of a vector spaces $S$ and $X$
are equal then $S$ is triangularizable.

In this paper, we study semigroups of partial transformations of a
set $X$ (see \cite{GM}). A semigroup of partial transformations of
a set $X$ is called {\em transitive} provided that for every
ordered pair $(x,y)\in X\times X$ there is some $\varphi\in S$
such that $x\varphi=y$, and it is {\em semitransitive} provided
that for every $(x,y)\in X\times X$ there is some $\varphi\in S$
such that either $x\varphi=y$ or $y\varphi=x$. The definitions
ensure that transitivity of $S$ implies its semitransitivity. If
$S$ is an inverse semigroup (in particular, if it is a group) then
the converse is also true. However, in general, there are
semitransitive semigroups which are not transitive.

We note that Schein \cite{schein} proved a number of results on transitive
effective representations of inverse semigroups by partial
one-to-one transformations of sets. Though to the best of our
knowledge transitive and semitransitive semigroups of partial
one-to-one transformations themselves have not been dealt with in
the literature.

The aim of our paper is to give a classification of minimal
transitive subsemigroups of the finitary inverse symmetric
semigroup modulo the classification of minimal transitive
subgroups of  finite symmetric groups and the classification of
semitransitive subsemigroups of the finite inverse symmetric
semigroup of the minimal cardinality modulo the classification of
transitive subgroups of the minimal cardinality of  finite
symmetric groups. It is well known that the classification of
minimal transitive subgroups of finite symmetric groups is a
difficult task. Namely, suppose that a finite group $G$ acts
faithfully and transitively on a finite set $X$ and write $|X|=n$
for the cardinality of $X$. We view $G$ as a subgroup of the
symmetric group $S_n$ of degree $n$. If $X=G$ and the action of
$G$ is by multiplication then we say that $G$ acts regularly.
Note that, in general, the transitivity implies that $|G|\ge |X|$.
Igo Dak Tai \cite{tai} showed that if $n\neq p,p^2$, where $p$ is
a prime, then there exists a minimal transitive subgroup $G$ of
$S_n$ that is not regular and such that $|G|>n$. Suprunenko
\cite{supr} showed that if $G$ is a minimal solvable transitive
subgroup of $S_n$, $n=pq$, $p$ and $q$ distinct primes such that
$p>q$ and $q\not|(p-1)$, then $G$ is either cyclic of order $pq$
or a minimal nonabelian group of order $p^mq$ or $pq^l$, where $m$
is the order of $p$ in the multiplicative group of the Galois
field $GF(q)$ and $l$ the multiplicative order of $q$ in $GF(p)$.
The remaining case $q|(p-1)$ was treated by Kopylova \cite{kop1}.
Kopylova in \cite{kop2} studied the structure of those subgroups
that occur as minimal (nonregular) transitive subgroups of $S_n$.
She showed that $A_5$ occurs as a minimal transitive subgroup of
$S_{10}$. More recently, Hulpke \cite{hul} (see also his PhD
thesis \cite{hulthesis}) listed all the transitive (nonregular)
groups up to degree $n=30$.

We conclude the introduction with a brief description of the setup of the paper. In the second section
we recall the definitions needed in the sequel and give the classification of minimal transitive
subsemigroups of the finitary inverse symmetric semigroup modulo the classification
of minimal transitive subgroups of finite symmetric groups. In the third section we
classify semitransitive subsemigroups of the finite inverse symmetric
semigroup of the minimal cardinality modulo the classification of
transitive subgroups of the minimal cardinality of  finite
symmetric groups.

\section{Classification of minimal transitive sub\-se\-mi\-groups}

We start by recalling standard definitions and elementary
properties of regular and inverse semigroups which we use in the
sequel \cite{hig,law,pet}. Let $S$ be a semigroup. Two elements
$a,b\in S$ are called {\em mutually inverse} provided that $aba=a$
and $bab=b$. Whenever the stated equalities hold, we also say that
$a$ is an inverse of $b$, and $b$ is an inverse of $a$. An element
$a\in S$ is called {\em regular} provided that it possesses at
least one inverse element. For $a\in S$ to be regular it is enough
to require that there is $b\in S$ such that $aba=a$ (then $a$ and
$bab$ are mutually inverse). The semigroup $S$ is called {\em
regular} provided that every $a\in S$ is regular. Further, $S$ is
called {\em inverse} provided that every element in $S$ possesses
a unique inverse. Equivalently, $S$ is inverse if and only if it
is regular and its idempotents commute. Therefore, a regular
subsemigroup of an inverse semigroup is necessarily inverse.

By $\IS(X)$ we denote the {\em full inverse symmetric semigroup}
over an underlying set $X$. (See \cite{hig,law,pet}.) In the case
when $X$ is a finite set and $n=|X|$ we write $\IS_n$ for $\IS(X)$
and take the convention that $X=\{1,\dots, n\}$. In the case when
$X$ is infinite we assume that $X\supseteq \N$. If $X$ is infinite
then all elements of $\IS(X)$ of finite ranks form a subsemigroup
(and even an ideal). We denote this semigroup by $\IS_{fin}(X)$
and refer to it as to {\em the semigroup of all finitary partial
permutations of $X$}. Accordingly, we call an element of finite
rank a {\em finitary element}. We note that when $X$ is a finite
set, one obviously has $\IS_n=\IS_{fin}(X)$.

From now on, by the inverse element to a given $a\in \IS(X)$ we
mean the element $a^{-1}$ which is inverse to $a$ in $\IS(X)$.

\begin{proposition}\label{pr:min_tr_simple}
Let $S$ be a transitive subsemigroup of $\IS(X)$ and $I$  a
non-zero ideal of $S$.  Then $I$ is a transitive ideal of
$\IS(X)$.
\end{proposition}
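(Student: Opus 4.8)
The plan is to establish the substantive content directly, namely that $I$ acts transitively on $X$: given an arbitrary ordered pair $(x,y)\in X\times X$, I will exhibit an element of $I$ carrying $x$ to $y$. The idea is to use a single fixed non-zero element of $I$ as a ``seed'' and to steer into and out of it using the transitivity of the ambient semigroup $S$, relying on the ideal property to keep the resulting composite inside $I$.

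First I would fix, once and for all, a non-zero element $\theta\in I$; since $I$ is a non-zero ideal such a $\theta$ exists, and being a non-empty partial bijection it moves some point, say $a\theta=b$ with $a,b\in X$. Now let $(x,y)\in X\times X$ be arbitrary. By transitivity of $S$ there is $\alpha\in S$ with $x\alpha=a$, and there is $\beta\in S$ with $b\beta=y$. Set $\psi=\alpha\theta\beta$. Composing on the right in accordance with the convention $x\varphi=y$, we get $x\psi=((x\alpha)\theta)\beta=(a\theta)\beta=b\beta=y$, so $\psi$ carries $x$ to $y$. Moreover $\psi\in I$: since $\theta\in I$ and $I$ is a (two-sided) ideal of $S$, we have $\alpha\theta\in SI\subseteq I$ and hence $\psi=(\alpha\theta)\beta\in IS\subseteq I$. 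As $(x,y)$ was arbitrary, $I$ is transitive, which is the assertion of the proposition.

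I do not expect a serious obstacle here; the argument is a short ``in–out'' steering construction. The only points requiring care are: verifying the domain and range conditions that make the composite $\alpha\theta\beta$ defined at $x$ (which is automatic from $x\alpha=a$, $a\theta=b$, $b\beta=y$, since each output feeds the next input); respecting the right-action order of composition; and observing that the hypothesis ``non-zero'' is exactly what guarantees a seed $\theta$ that genuinely moves a point, so the construction never degenerates to the empty map. The essential mechanism is that the two-sided ideal property absorbs both steering factors $\alpha$ and $\beta$ back into $I$.
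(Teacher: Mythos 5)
Your proof is correct and is essentially identical to the paper's own argument: the paper likewise fixes a non-zero $\varphi\in I$ with $i\varphi=j$, uses transitivity of $S$ to pick $\psi_1,\psi_2\in S$ with $k\psi_1=i$ and $j\psi_2=l$, and concludes $\psi_1\varphi\psi_2\in I$ maps $k$ to $l$. Only your notation and the explicit remarks about composition order and the ideal absorption differ.
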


\begin{proof}
Let $\varphi\in I$, $\varphi\neq 0$. Consider any
$i\in\dom\varphi$. Suppose $j=i\varphi$. Let $k,l\in X$.
Transitivity of $S$ ensures that there exist some
$\psi_1,\psi_2\in S$ satisfying $k\psi_1=i$ and $j\psi_2=l$. Then
$k\psi_1\varphi\psi_2=l$, and thus $\psi_1\varphi\psi_2\in I$.
\end{proof}

Recall that a semigroup $S$ is called {\em simple} if it does not
possess proper ideals, and {\em $0$-simple} if it has a zero
element $0$, $S^2\neq \{0\}$ and $S$ does not possess proper
non-zero ideals.

\begin{corollary}\label{cor:simple}
A minimal transitive subsemigroup $S$ of $\IS(X)$ is either $0$-simple
or simple depending on whether $S$ contains the zero element or
not.
\end{corollary}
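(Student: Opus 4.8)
The plan is to split into the two cases named in the statement and, in each, to combine Proposition~\ref{pr:min_tr_simple} with the minimality of $S$. The key observation I would record first is what minimality buys us: since $S$ is a \emph{minimal} transitive subsemigroup of $\IS(X)$, every transitive subsemigroup $T$ with $T\subseteq S$ must coincide with $S$. In particular, recalling that any ideal $I$ of $S$ is itself a subsemigroup (as $II\subseteq IS\subseteq I$), whenever Proposition~\ref{pr:min_tr_simple} applies to a non-zero ideal $I$ to conclude that $I$ is transitive, minimality immediately forces $I=S$.

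First I would treat the case in which $S$ does not contain the zero element. Here every ideal $I$ of $S$ is non-empty and, because $0\notin S$, automatically meets the ``non-zero'' hypothesis of Proposition~\ref{pr:min_tr_simple}; hence $I$ is transitive, and the observation above gives $I=S$. Thus $S$ has no proper ideals, which is precisely the assertion that $S$ is simple.

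Next I would treat the case $0\in S$ and verify the two clauses in the definition of $0$-simplicity. The clause on proper non-zero ideals is handled exactly as before: any non-zero ideal $I$ of $S$ is transitive by Proposition~\ref{pr:min_tr_simple}, whence $I=S$, so the only non-zero ideal of $S$ is $S$ itself. It remains to check $S^2\neq\{0\}$, and here I would invoke transitivity directly rather than the proposition. Fixing any $x\in X$ and applying transitivity to the pair $(x,x)$ yields $\varphi\in S$ with $x\varphi=x$; then $x\in\dom(\varphi^2)$, so $\varphi^2\neq 0$ while $\varphi^2\in S^2$, giving $S^2\neq\{0\}$.

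I do not anticipate a genuine obstacle: the argument is essentially a bookkeeping exercise matching the definitions of simplicity and $0$-simplicity against Proposition~\ref{pr:min_tr_simple} and minimality. The only point that calls for a moment's care is the verification of $S^2\neq\{0\}$ in the $0$-simple case, which must be extracted from transitivity rather than from the proposition (the zero element has empty domain, so exhibiting any product with non-empty domain suffices).
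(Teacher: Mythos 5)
Your proof is correct and follows exactly the route the paper intends: the paper states Corollary~\ref{cor:simple} without proof, as an immediate consequence of Proposition~\ref{pr:min_tr_simple} together with minimality, which is precisely your argument. Your explicit verification of $S^2\neq\{0\}$ via transitivity (a detail the paper leaves implicit) is a correct and welcome addition.
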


\begin{proposition}\label{pr:min_tr}
Suppose $S$ is a transitive subsemigroup of $\IS(X)$ such that
$S\cap\IS_{fin}(X)$ contains a non-zero element. Then there is an
inverse transitive subsemigroup of $\IS(X)$ which is contained in
$S\cap\IS_{fin}(X)$.
\end{proposition}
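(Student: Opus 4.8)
The plan is to whittle $S\cap\IS_{fin}(X)$ down to a subsemigroup with a rigid, Brandt-like multiplication, and then read off an inverse transitive subsemigroup from that structure. First I would note that, since $\IS_{fin}(X)$ is an ideal of $\IS(X)$, the set $I:=S\cap\IS_{fin}(X)$ is a non-zero ideal of $S$; by Proposition~\ref{pr:min_tr_simple} it is therefore a transitive subsemigroup of $\IS(X)$, every element of which is finitary. Next, letting $r\ge 1$ be the least positive rank occurring in $I$, I would pass to $J:=\{\varphi\in I:\rank\varphi\le r\}$. Since a product cannot increase rank and no rank strictly between $0$ and $r$ occurs, $J$ is a non-zero ideal of $I$, so Proposition~\ref{pr:min_tr_simple} applies once more and $J$ is again transitive; now every non-zero element of $J$ has rank exactly $r$.

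The point of this reduction is the following rigidity: for non-zero $\alpha,\beta\in J$ one has $\rank(\alpha\beta)=\card(\ran\alpha\cap\dom\beta)$, and since this value is either $0$ or $r$, the product $\alpha\beta$ is non-zero if and only if $\ran\alpha=\dom\beta$. In particular a non-zero idempotent of $J$ can only be a partial identity $\mathrm{id}_C$ with $\card C=r$, and it is exactly here that I expect the real work to lie: $J$ is not handed to us as a regular semigroup, so I must manufacture idempotents and inverses inside it. The device is constant rank together with finiteness. Given $x\in X$, transitivity yields $\epsilon\in J$ with $x\epsilon=x$; then $x\in\dom\epsilon\cap\ran\epsilon$, so $\epsilon^2\neq 0$, which by constancy of rank forces $\dom\epsilon=\ran\epsilon=:C$. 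Thus $\epsilon$ is a permutation of the finite set $C$, all of whose powers lie in $J$, so $\mathrm{id}_C=\epsilon^{\mathrm{ord}(\epsilon)}\in J$ and $\epsilon^{-1}=\epsilon^{\,\mathrm{ord}(\epsilon)-1}\in J$. Hence for every $x$ there is an idempotent $\mathrm{id}_C\in J$ with $x\in C$.

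With idempotents in hand I would take $T:=\{\alpha\in J:\alpha^{-1}\in J\}$, where $\alpha^{-1}$ is the inverse in $\IS(X)$. Using $(\alpha\beta)^{-1}=\beta^{-1}\alpha^{-1}$ and the product rule above, $T$ is closed under multiplication and under inversion and contains $0$; since inverses in $\IS(X)$ are unique, $T$ is a regular subsemigroup of the inverse semigroup $\IS(X)$ and hence is itself inverse. The remaining, decisive step is transitivity of $T$. Given $x,y$, choose idempotents $\mathrm{id}_C\ni x$ and $\mathrm{id}_{C'}\ni y$ in $J$ and, using transitivity of $J$, elements $\psi,\psi'\in J$ with $x\psi=y$ and $y\psi'=x$. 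Squeezing gives $\theta:=\mathrm{id}_C\,\psi\,\mathrm{id}_{C'}$ and $\eta:=\mathrm{id}_{C'}\,\psi'\,\mathrm{id}_C$ in $J$ with $x\theta=y$; both are non-zero, hence of rank $r$, hence honest bijections $\theta\colon C\to C'$ and $\eta\colon C'\to C$ between the two $r$-sets. Then $g:=\theta\eta$ is a permutation of $C$ lying in $J$, so $g^{-1}=g^{\,\mathrm{ord}(g)-1}\in J$, and the round-trip identity $\theta^{-1}=\eta\eta^{-1}\theta^{-1}=\eta(\theta\eta)^{-1}=\eta g^{-1}$ exhibits $\theta^{-1}$ as a non-zero product of elements of $J$, whence $\theta^{-1}\in J$ and $\theta\in T$. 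Since $x\theta=y$, this proves $T$ transitive, and $T\subseteq J\subseteq S\cap\IS_{fin}(X)$ is the required inverse transitive subsemigroup.

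The one place to be careful is the bookkeeping of domains and ranges in that last computation: one must record that $\eta\eta^{-1}=\mathrm{id}_{C'}$ (the domain of $\eta$), not $\mathrm{id}_C$, because it is precisely this that makes $\eta g^{-1}$ equal to $\theta^{-1}$ on the nose rather than to some proper restriction of it. Everything else is routine manipulation of partial bijections, so I expect the proof to hinge on the two genuine ideas above, namely the passage to constant minimal rank and the extraction of idempotents (and then inverses) from finite cyclic subsemigroups forced by that constant rank.
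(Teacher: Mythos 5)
Your proof is correct, and it takes a genuinely different route from the paper's. After the common first step (using Proposition~\ref{pr:min_tr_simple} to make $S\cap\IS_{fin}(X)$ transitive), the paper stays at arbitrary ranks and, for each ordered pair $(i,j)$, runs a rank-descent: taking $\varphi,\psi\in S$ with $i\varphi=j$ and $j\psi=i$, either one of the sandwich products $\varphi\psi\varphi$, $\psi\varphi\psi$ preserves rank --- in which case, after a normalization, $\ran\varphi=\dom\psi$ and $\ran\psi=\dom\varphi$, so $\varphi\psi$ is a permutation of the finite set $\dom\varphi$ and a suitable power gives $\varphi^{-1}=(\psi\varphi)^{k-1}\psi\in S$ --- or both sandwiches strictly drop rank, and one iterates with $\varphi\psi\varphi$ and $\psi\varphi\psi$ (which still map $i$ to $j$ and $j$ to $i$) until rank $1$, where the bad case cannot occur; the inverse subsemigroup is then \emph{generated} by the pairs $a_{i,j}$, $a_{i,j}^{-1}$. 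You eliminate the descent entirely by first passing to the ideal $J$ of minimal positive rank $r$, which is again transitive by Proposition~\ref{pr:min_tr_simple} and in which the constant rank gives the rigidity $\alpha\beta\neq 0$ iff $\ran\alpha=\dom\beta$; this makes the paper's Case 2 impossible globally, and your extraction of idempotents and inverses from powers of permutations of finite $r$-sets is exactly the finiteness mechanism of the paper's Case 1, applied in a case-free setting. Notably, your minimal-rank reduction is precisely the observation the paper defers to the proof of Proposition~\ref{prop:brandt}, Item 3 (all non-zero elements of a transitive Brandt subsemigroup have equal rank, since otherwise the elements of smallest positive rank would form a transitive non-zero ideal); you front-load that structural insight, and in exchange your $T=\{\alpha\in J:\alpha^{-1}\in J\}$, which is closed under products because $(\alpha\beta)^{-1}=\beta^{-1}\alpha^{-1}$, is already essentially of Brandt type, anticipating Corollary~\ref{cor:brandt}, whereas the paper's version needs no preliminary reduction and handles inverse pairs through arbitrary ranks. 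Two harmless pedantries: your formula $\epsilon^{-1}=\epsilon^{\mathrm{ord}(\epsilon)-1}$ degenerates when $\mathrm{ord}(\epsilon)=1$ (but then $\epsilon=\mathrm{id}_C$ is its own inverse), and $0\in T$ only if $0\in J$, which is immaterial; both proofs close identically, via the fact that a regular subsemigroup of an inverse semigroup is inverse.
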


\begin{proof}
As $S\cap \IS_{fin}(X)$ is a non-zero ideal of $S$ then by
Proposition~\ref{pr:min_tr_simple} we have that $S\cap
\IS_{fin}(X)$ is also a transitive subsemigroup of $\IS(X)$. Hence
we can assume that all elements of $S$ are finitary.

It is enough to show that $S$ contains a regular transitive
subsemigroup.

Let $i,j\in X$. First let us show that $S$ contains a pair of
elements $a_{i,j}$, $a_{i,j}^{-1}$ such that $a_{i,j}$ is mapping
$i$ to $j$. Consider an arbitrary element $\varphi\in S$ such that
$i\varphi=j$. Consider any $\psi\in S$ such that $j\psi =i$ (the
existence of such an element $\psi$ is provided by the
transitivity of $S$). Obviously,  we have $\rank
\varphi\psi\varphi\leq \rank\varphi$ and $\rank
\psi\varphi\psi\leq \rank\psi$. Consider two possible cases.

{\em Case 1.} Suppose $\rank \varphi\psi\varphi = \rank\varphi$ or
$\rank \psi\varphi\psi= \rank\psi$. Suppose that the first of
these equalities holds (if this is not the case then we just
switch $\varphi$ and $\psi$). Set
$\overline{\psi}=\psi\varphi\psi$. Then
$\rank\overline{\psi}\leq\rank\psi$ and $j\overline{\psi}=i$. Observe that
$$\rank\overline{\psi}\varphi\overline{\psi}=\rank\psi\varphi\psi\varphi\psi=
\rank\psi\varphi\psi=\rank\overline{\psi}$$
and similarly
$$\rank\varphi\overline{\psi}\varphi=\rank\varphi\psi\varphi\psi\varphi=\rank\varphi
\psi\varphi=\rank\varphi.$$
Thus we replace $\psi$ by $\overline{\psi}$ if needed and assume that
$\rank\psi\leq\rank\varphi$. This together with
$\rank \varphi\psi\varphi = \rank\varphi$ implies that
$\ran\varphi=\dom\psi$ and $\ran\psi=\dom\varphi$. If
$\varphi\psi\varphi=\varphi$, then $\psi=\varphi^{-1}$, and we are
done.  Otherwise, set $M=\dom\varphi$ and note that
$\dom\varphi\psi=\ran\varphi\psi=M$. This implies that there is
some $k\geq 1$ such that $\left(\varphi\psi\right)^k$ equals the identity
transformation of $M$. Fix such a $k$. Then we can write
$$
\varphi=(\varphi\psi)^k\varphi=\varphi\cdot(\psi\varphi)^{k-1}\psi\cdot\varphi,
$$
ensuring that $\alpha\varphi\alpha=\varphi^{-1}$, where $\alpha =
(\psi\varphi)^{k-1}\psi$.

{\em Case 2.} Suppose $\rank \varphi\psi\varphi < \rank\varphi$
and $\rank \psi\varphi\psi < \rank\psi$.  Set
$\varphi_1=\varphi\psi\varphi$ and $\psi_1=\psi\varphi\psi$. Then
$\rank\varphi_1<\rank\varphi$ and $\rank\psi_1<\rank\psi$. Note
that we still have $i\varphi_1=j$ and $j\psi_1=i$. In particular,
if $\rank\varphi=1$ or $\rank\psi=1$, this case does not occur.

Applying the argument above at most
$min\{\rank\varphi,\rank\psi\}-1$ times we either find an element
mapping $i$ to $j$, which, together with its inverse, lies in $S$,
or find an element of rank 1 mapping $i$ to $j$. But for such an
element its inverse must also belong to $S$ since for such an element
Case 2 is not possible.

For every $i,j$ in $X$, we fix some $a_{i,j}$ such
that $a_{i,j},a_{i,j}^{-1}\in S$ and $a_{i,j}$ maps $i$ to $j$. We
consider the subsemigroup $T$ of $S$ generated by all such pairs
$a_{i,j}$ and $a_{i,j}^{-1}$. It is obviously transitive.
Besides, it is regular since for any $b=b_1\cdots b_m\in T$, where
$b_1, \dots, b_m$ are generators of $T$, we know that the element
$b'=b_m^{-1}\cdots b_1^{-1}$, which also belongs to $T$, is the inverse
of $b$ in $\IS(X)$. Thus $b$ and $b'$ form a pair of mutually
inverse elements in $T$. It follows that $T$ is a regular
semigroup and the proof is complete.
\end{proof}

\begin{remark}  In
Proposition \ref{pr:min_tr} the requirement that
$S\cap\IS_{fin}(X)$ contains a non-zero element $S$ is essential.
If this requirement is not satisfied, $S$ may not contain an
inverse transitive subsemigroup as the following example
illustrates.
\end{remark}

\begin{example}{\em
Let $X$ be infinite.  Let $S$ be a subsemigroup of $\IS(X)$
consisting of all $\varphi\in\IS(X)$ such that $\dom\varphi=X$ and
$\ran\varphi\neq X$. Obviously, $S$ is transitive. Besides, $S$ is
idempotent-free. It follows that $S$ does not contain inverse
subsemigroups.}
\end{example}

In order to formulate and prove the classification of minimal
transitive subsemigroups of $\IS_{fin}(X)$ we recall the
definition of a Brandt semigroup and then establish how Brandt
subsemigroups of $\IS_{fin}(X)$ are built.

Let $G$ be a group, $0\not\in G$ some symbol, which we call zero,
and $I$ a set. Let $B(G,I)$ be the set of all matrices with
entries from $G\bigcup \{0\}$, whose rows and columns are indexed
by $I$, and which contain at most one non-zero element. We denote
the zero matrix just by $0$, and the matrix whose the only entry
from $G$ equals $g$ and is in the position $(i,j)$  by $M(i,g,j)$.
The product in $B(G,I)$ is given by
\begin{equation}\label{eq:brandt}
M(i,g,j)M(k,h,l)=\left\lbrace\begin{array}{l} M(i,gh,l), \text{ if}
j=k;\\
0, \text{ otherwise},\end{array}\right.
\end{equation}
and the product with $0$ at either side is again $0$. The
semigroup $B(G,I)$ is called a {\em Brandt semigroup} (see
\cite{hig,law,pet}). Brandt semigroups are inverse completely
$0$-simple semigroups, and every completely $0$-simple inverse
semigroup is isomorphic to some Brandt semigroup.

\begin{corollary}\label{cor:brandt}
A subsemigroup $S$ of $\IS_{fin}(X)$ is a minimal transitive
subsemigroup of $\IS_{fin}(X)$, which is not a group, if and only
if $S$ is a minimal transitive Brandt subsemigroup of
$\IS_{fin}(X)$.
\end{corollary}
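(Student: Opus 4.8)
The plan is to prove Corollary~\ref{cor:brandt} by invoking the structure theory already assembled: Corollary~\ref{cor:simple} tells us that a minimal transitive subsemigroup $S$ of $\IS_{fin}(X)$ is either simple or $0$-simple, and Proposition~\ref{pr:min_tr} tells us that $S$ contains an inverse transitive subsemigroup, which by minimality must equal $S$ itself. So I would first record that any minimal transitive subsemigroup of $\IS_{fin}(X)$ is an inverse semigroup. The content of the corollary is then the identification, under the extra hypothesis that $S$ is not a group, of $S$ as a Brandt semigroup.

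\medskip

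\emph{($\Leftarrow$) direction.} This is the easy inclusion: a minimal transitive Brandt subsemigroup is in particular a minimal transitive subsemigroup, and since a nontrivial Brandt semigroup $B(G,I)$ with $|I|\ge 2$ contains the zero element and properly non-idempotent structure, it is not a group. (When $|I|=1$ the Brandt semigroup is a group with zero adjoined, so one should note that the relevant Brandt subsemigroups here have $|I|\ge 2$; I would state this as part of the setup.) So the real work is the forward direction.

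\medskip

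\emph{($\Rightarrow$) direction.} Assume $S$ is a minimal transitive subsemigroup of $\IS_{fin}(X)$ that is not a group. By the observation above $S$ is inverse, and by Corollary~\ref{cor:simple} it is either simple or $0$-simple. Since $S$ consists of finitary partial permutations and is not a group, I expect $S$ to contain the zero element (the empty partial permutation): indeed if $S$ were simple without a zero, one can argue that transitivity together with the finitary ranks forces a group structure, contradicting the hypothesis; so $S$ is $0$-simple. The plan is then to show $S$ is \emph{completely} $0$-simple. The standard route is that $\IS_{fin}(X)$ satisfies a descending chain condition on the relevant structure because ranks are finite natural numbers: any $0$-simple inverse semigroup of finitary partial permutations has its principal factors controlled by rank, and the minimal nonzero rank among elements of $S$ yields primitive idempotents, which is exactly the criterion for complete $0$-simplicity. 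Once $S$ is shown to be a completely $0$-simple inverse semigroup, the structure theorem quoted in the excerpt (``every completely $0$-simple inverse semigroup is isomorphic to some Brandt semigroup'') finishes the identification, and minimality of $S$ as a transitive subsemigroup transfers to minimality as a transitive Brandt subsemigroup.

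\medskip

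The main obstacle I anticipate is the passage from ``$0$-simple'' to ``completely $0$-simple,'' i.e.\ producing a primitive (rank-minimal) idempotent and verifying that the action of $S$ on its $\mathcal D$-classes is pinned down by a single group $G$ acting with a single index set $I$. The finiteness of ranks is the key leverage here, and I would make the argument via choosing an element of $S$ of minimal nonzero rank and showing that idempotents of that minimal rank are primitive; this is where one must be careful to use that $S$ is inverse (so idempotents commute) rather than merely regular.
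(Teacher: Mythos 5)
Your proposal is correct and takes essentially the same route as the paper, whose proof is a one-line deduction from Corollary~\ref{cor:simple}, Proposition~\ref{pr:min_tr} (which by minimality forces $S$ itself to be inverse), and the fact that a simple inverse subsemigroup of $\IS_{fin}(X)$ is a group, combined with the identification of Brandt semigroups with inverse completely $0$-simple semigroups stated just before the corollary. The details you add --- ruling out the degenerate index set and passing from $0$-simple to completely $0$-simple via an idempotent of minimal nonzero rank, which is primitive precisely because ranks are finite --- are exactly the steps the paper leaves implicit, and you fill them in correctly.
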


\begin{proof}
The statement follows from Corollary~\ref{cor:simple},
Proposition~\ref{pr:min_tr} and the fact that a simple inverse
semigroup is actually a group.
\end{proof}

Now we determine the structure of transitive Brandt subsemigroups
of $\IS_{fin}(X)$. Let $I$ be an index set, $|I|>1$. Let, further,
$\{M_i, \, i\in I\}$ be a collection of pairwise disjoint subsets
of $X$ such that $X=\cup_{i\in I}M_i$ and all $M_i$-s are of the
same finite cardinality (this, in particular, implies that $X$ and
$I$ must have the same cardinality whenever $X$ is infinite).
Suppose $1\in I$ and let $G$ be a subgroup of $\S(M_1)$. For every
$i\in I$ fix some bijection $\pi_i$ from $M_1$ to $M_i$. For every
$i,j\in I$ and $g\in G$ set $M(i,g,j)=\pi_i^{-1}g\pi_j$.

\begin{proposition}\label{prop:brandt}
\begin{enumerate}
\item \label{i1} All $M(i,g,j)$ together with $0$ form a Brandt subsemigroup
of $\IS_{fin}(X)$, which we denote by $B(G; M_i; \pi_i)_{i\in I}$.
\item \label{i12} $B(G; M_i; \pi_i)_{i\in I} \subseteq B(H; M_i; \pi_i)_{i\in
I}$ if and only if $G<H$.
\item\label{i2} $B(G;M_i; \pi_i)_{i\in I}$ is a transitive subsemigroup of $\IS_{fin}(X)$ if and only
if $G$ is a transitive subgroup of $\S(M_1)$.
\item\label{i3} Every transitive Brandt subsemigroup of $\IS_{fin}(X)$
coincides with some $B(G;M_i;\pi_i)_{i\in I}$, where $G$ is a
transitive subgroup of $\S(M_1)$.
\end{enumerate}
\end{proposition}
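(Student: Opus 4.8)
The plan is to handle the four assertions in order, since \ref{i12}, \ref{i2} and \ref{i3} all rest on the concrete multiplication computed for \ref{i1}. For \ref{i1} I would simply multiply out
\[
M(i,g,j)M(k,h,l)=\pi_i^{-1}g\,\pi_j\pi_k^{-1}\,h\pi_l,
\]
the decisive factor being the middle composite $\pi_j\pi_k^{-1}$. As $\pi_j$ is a bijection $M_1\to M_j$ and $\pi_k^{-1}$ is defined exactly on $M_k$, disjointness of the blocks gives $\pi_j\pi_k^{-1}=\mathrm{id}_{M_1}$ when $j=k$ and the empty map when $j\neq k$; substituting yields $\pi_i^{-1}(gh)\pi_l=M(i,gh,l)$ in the first case (note $gh\in G$ since $G\le\S(M_1)$) and $0$ in the second, which is exactly the Brandt rule \eqref{eq:brandt}. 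To see that the set is a faithful copy of $B(G,I)$ I would record that $\dom M(i,g,j)=M_i$ and $\ran M(i,g,j)=M_j$, so the indices are recovered from domain and range, after which $g=\pi_i M(i,g,j)\pi_j^{-1}$ is determined; thus distinct triples give distinct partial permutations.

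Parts \ref{i12} and \ref{i2} are then short. For \ref{i12}, if $G\le H$ the inclusion is termwise; conversely the containment forces $M(1,g,1)=\pi_1^{-1}g\pi_1$ into $B(H;M_i;\pi_i)_{i\in I}$, and since its domain and range both equal $M_1$ the indices must be $1$, whence the injectivity just noted gives $g\in H$. For \ref{i2} I would translate the two transitivities into one another through the coordinates: for $x\in M_i$ and $y\in M_j$ the element $M(i,g,j)$ sends $x$ to $y$ precisely when $(x\pi_i^{-1})g=y\pi_j^{-1}$ in $M_1$. Hence if $G$ is transitive one solves this for every $x,y$, and $S$ is transitive; conversely, given transitivity of $S$ and any $x,y\in M_1$, the element realizing the pair $(x,y)$ must have both indices equal to $1$ (a nonzero element defined at a point of $M_1$ has domain $M_1$, and similarly for its range), so $(x\pi_1^{-1})g=y\pi_1^{-1}$ for some $g\in G$; as $(x,y)$ ranges over $M_1\times M_1$ this yields solvability of $ag=b$ in $G$, i.e.\ transitivity of $G$.

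The substantive part is \ref{i3}, where the main work is to manufacture the data $(G,M_i,\pi_i)$ from an abstract transitive Brandt subsemigroup $S$. First I would read off the blocks: every idempotent of $\IS_{fin}(X)$ is a partial identity, so the nonzero idempotents of $S$ are $e_i=\mathrm{id}_{M_i}$ for $i$ in the index set $I$ of $S$; the Brandt rule makes distinct nonzero idempotents orthogonal, so $\mathrm{id}_{M_i}\,\mathrm{id}_{M_j}=\mathrm{id}_{M_i\cap M_j}=0$ forces $M_i\cap M_j=\emptyset$, and transitivity of $S$ puts every point of $X$ in the domain of some nonzero element, giving $X=\bigcup_{i}M_i$. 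For nonzero $\varphi\in S$ the idempotents $\varphi\varphi^{-1}$ and $\varphi^{-1}\varphi$ are some $e_i$, $e_j$, so $\dom\varphi=M_i$ and $\ran\varphi=M_j$; since $\varphi$ is injective and finitary, all the $M_i$ share one finite cardinality. Fixing a base index $1$, I set $G=\{\varphi\in S:\varphi\varphi^{-1}=\varphi^{-1}\varphi=e_1\}$, a group of total bijections of $M_1$, i.e.\ $G\le\S(M_1)$, and for each $i$ I pick $\pi_i\in S$ with $\dom\pi_i=M_1$ and $\ran\pi_i=M_i$ (possible in a Brandt semigroup, and taking $\pi_1=e_1$).

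The crux is then the single identity
\[
\varphi=\pi_i^{-1}\bigl(\pi_i\varphi\pi_j^{-1}\bigr)\pi_j
\qquad(\dom\varphi=M_i,\ \ran\varphi=M_j).
\]
The inner element $g:=\pi_i\varphi\pi_j^{-1}$ lies in $S$ (which is inverse, so $\pi_i^{-1},\pi_j^{-1}\in S$) and maps $M_1$ bijectively onto $M_1$, hence $g\in G$; and $\pi_i^{-1}\pi_i=e_i$, $\pi_j^{-1}\pi_j=e_j$ act as identities on $\dom\varphi$ and $\ran\varphi$, so the right-hand side collapses to $e_i\varphi e_j=\varphi$. Thus every nonzero element of $S$ is $M(i,g,j)=\pi_i^{-1}g\pi_j$ with $g\in G$, while conversely every such product lies in $S$; therefore $S=B(G;M_i;\pi_i)_{i\in I}$, and transitivity of $G$ follows from \ref{i2}. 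I expect the only delicate point to be the bookkeeping of domains and ranges under the right-action convention --- checking that $\pi_i^{-1}\pi_i$ restores all of $M_i$ rather than a proper part --- which is precisely where the completely $0$-simple (Brandt) hypothesis is used.
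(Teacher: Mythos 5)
Your proposal is correct, and for items \ref{i1}, \ref{i12} and \ref{i2} it matches the paper, which dispatches \ref{i1} and \ref{i12} in a line each; your explicit computation of $\pi_j\pi_k^{-1}$ and the injectivity of $(i,g,j)\mapsto M(i,g,j)$ are details the paper leaves implicit. The substantive divergence is in \ref{i3}. The paper argues via $0$-simplicity and ranks: all non-zero elements of a transitive Brandt subsemigroup $S$ have the same rank (otherwise the elements of smallest positive rank together with zero would form a proper non-zero ideal), domains and ranges of elements either coincide with $M_1$ or are disjoint from it, transitivity produces the partition and the $\pi_i$, and then the paper proves only the inclusion $B(G;M_i;\pi_i)_{i\in I}\subseteq S$ and concludes equality ``in view of the minimality of $S$'' --- a hypothesis that does not actually appear in the statement of \ref{i3} and becomes available only where the proposition is applied (Corollary~\ref{cor:brandt}, Theorem~\ref{th:tr_char}). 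Your argument replaces this with an idempotent analysis ($\varphi\varphi^{-1}=e_i$ identifies $\dom\varphi$ with a block) and the explicit factorization $\varphi=\pi_i^{-1}(\pi_i\varphi\pi_j^{-1})\pi_j$ with $\pi_i\varphi\pi_j^{-1}\in S\cap\S(M_1)=G$, which yields the reverse inclusion $S\subseteq B(G;M_i;\pi_i)_{i\in I}$ directly. So your proof establishes \ref{i3} exactly as stated, with no appeal to minimality; on this point it is tighter than the paper's. What the paper's route buys in exchange is the equal-rank observation, which is cited again later in the proof of Theorem~\ref{th:tr_is_n}.

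One small point, which you share with the paper: when you write $\mathrm{id}_{M_i}\,\mathrm{id}_{M_j}=\mathrm{id}_{M_i\cap M_j}=0$, you tacitly assume that the zero of the abstract Brandt semigroup $S$ is the empty partial permutation of $\IS_{fin}(X)$. This deserves one line: if the zero of $S$ were $z=\mathrm{id}_A$ with $A\neq\emptyset$, then $\{z\}$ would be a non-zero ideal of $S$, hence transitive by Proposition~\ref{pr:min_tr_simple}, which a one-element set cannot be once $|X|\geq 2$ (equivalently, $sz=z$ for all $s\in S$ forces every element of $S$ to fix $A$ pointwise, contradicting transitivity). With that line added, your proof is complete.
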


\begin{proof} It follows from the definition of $B(G;M_i; \pi_i)_{i\in I}$ that the multiplication in it obeys the
rule~\eqref{eq:brandt}, and therefore~\ref{i1} is proven.

The second claim is straightforward.

Suppose $G$ is a transitive subgroup of $\S(M_1)$ and take
arbitrary $k,l\in X$. Suppose $k\in M_i$, $l\in M_j$.
We can map $k$ and $l$ to $M_1$ by $\varphi = M(i,e,1)$ and
$\psi=M(j,e,1)$ respectively. Further, $k\varphi$ can be mapped to
$l\psi$ by some $\tau=M(1,g,1)$ by the transitivity of $G$. It
follows that $\varphi\tau\psi^{-1}$ maps $k$ to $l$, implying the
transitivity of $B(G;M_i; \pi_i)_{i\in I}$. Further, the mappings
from $M_1$ to itself are given by the elements of the form
$M(1,g,1)$ where $g\in G$. Thus, if $G$ is not transitive not
every element in $M_1$ can be mapped to any element of $M_1$ by
$B(G;M_i; \pi_i)_{i\in I}$. Hence,~\ref{i2} is proven.

Now, suppose that $S$ is a  transitive Brandt subsemigroup of
$\IS_{fin}(X)$.
Consider any element $\varphi\in S$. Suppose
$k=\rank\varphi$.
Then all the non-zero elements in $S$ must also have rank $k$
(since otherwise all elements of the smallest positive rank in $S$
would form a transitive non-zero ideal). Denote $M_1=\dom\varphi$
and fix $1\in M_1$. We note that for every element in $S$ its
domain and range should either coincide with $M_1$ or be disjoint
with it, as otherwise we would be able to find elements with
positive ranks strictly less than $k$. Let $i\in X$. There is
$\pi$ in $S$ mapping $1$ to $i$. Since $\dom\pi\cap
M_1\neq\varnothing$ we have that $\dom\pi=M_1$. Similarly we
conclude that $\ran\pi=M_1$ if $i\in M_1$, and $\ran\pi\cap
M_1=\varnothing$ if $i\not\in M_1$. This argument ensures that
there is a decomposition $X=\cup_{i\in I}M_i$ such that $M_i$-s
are pairwise disjoint and all $M_i$-s are of the same finite
cardinality. And, moreover, for every $i\in I$ there is some
$\pi_i\in S$ with $\dom\pi_i=M_1$ and $\ran\pi_i=M_i$.

To map an element of $M_1$ to another element of $M_1$ we have to
act by some $\pi$ with $\dom\pi=\ran\pi=M_1$, that is, by an
element of a maximal subgroup $\S(M_1)$. It follows that
$S\cap\S(M_1)$ is a transitive subgroup of $\S(M_1)$ which we
denote by $G$.

We know that $S$ contains $G$, and all $\pi_i$, $i\in I$. Consider
the semigroup $S'$, generated as an inverse semigroup by $G$ and
$\pi_i$, $i\in I$. It equals $B(G;M_i;\pi_i)_{i\in I}$ and thus,
in particular, is transitive. We are left to conclude that it must
coincide with $S$ in view of the minimality of $S$.
Therefore,~\ref{i3} is also proven.
\end{proof}

We note that $B=B(G;M_i; \pi_i)_{i\in I}$ and
$B'=B(G;M_i;\pi'_i)_{i\in I}$ are isomorphic. To see this, for
each $i\in I$ we fix $g_i\in \S(M_1)$ such that $\pi_i'=\pi_i
g_i$, and make sure that the map sending $M(i,g,j)$ in $B$ to
$g_i^{-1}M(i, g, j)g_j$ in $B'$ is an isomorphism. Further,
$$B(G;M_i; \pi_i)_{i\in I}\simeq B(\pi_1^{-1}G\pi_1;M_i;\pi_1^{-1}\pi_i)_{i\in I}.$$
Therefore, we can always choose $S$ such that $\pi_1=e$. We also
note that $B=B'$ if and only if all $\pi_i\pi_j'^{-1}$-s lie in
the normalizer of $G$ in $\S(M_1)$.

In the following theorem we give a classification of minimal transitive subsemigroups
of $\IS_{fin}(X)$ modulo the classification of minimal transitive
subgroups of the finite full symmetric groups $\S_k$.

\begin{theorem}\label{th:tr_char}
\begin{enumerate}
\item\label{ii1} Let $X$ be an infinite set. Then the semigroups
$B(G;M_i;$ $\pi_i)_{i\in I}$, where $G$ is a minimal transitive
subgroup of $\S(M_1)$, constitute the full list of minimal
transitive subsemigroups of $\IS_{fin}(X)$.
\item\label{ii2} Let $X$ be a finite set, and $|X|=n$. Then the semigroups $B(G;M_i;\pi_i)_{i\in
I}$, where $|I|>1$ and $G$ is a minimal transitive subgroup of
$\S(M_1)$; and all minimal transitive subgroups of $\S(X)$,
constitute the full list of minimal transitive subsemigroups of
$\IS(X)$.
\end{enumerate}
\end{theorem}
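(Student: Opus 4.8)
The plan is to assemble the theorem from the structural results already established, handling the infinite and finite cases in parallel but keeping track of where they differ.

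First I would dispose of the infinite case \ref{ii1}. By Corollary~\ref{cor:brandt}, a minimal transitive subsemigroup of $\IS_{fin}(X)$ that is not a group must be a minimal transitive Brandt subsemigroup. When $X$ is infinite, I would observe that no genuine subgroup of $\S(M_1)$ can act transitively on all of $X$, since group elements are total bijections of $X$ and the $M_i$ partition an infinite set into infinitely many finite blocks; hence every minimal transitive subsemigroup is of Brandt type and never a group. Then Proposition~\ref{prop:brandt}\ref{i3} tells me every transitive Brandt subsemigroup equals some $B(G;M_i;\pi_i)_{i\in I}$ with $G$ transitive on $M_1$. The minimality transfers: by part~\ref{i12} of that proposition, $B(G;\cdot)\subseteq B(H;\cdot)$ iff $G<H$, and by part~\ref{i2} transitivity of the Brandt semigroup is equivalent to transitivity of $G$; so $B(G;M_i;\pi_i)_{i\in I}$ is minimal transitive exactly when $G$ is a minimal transitive subgroup of $\S(M_1)$. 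This gives the claimed list.

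For the finite case \ref{ii2}, the structure is the same except that a minimal transitive subsemigroup may now be a group, and I must argue both that the two families listed are minimal transitive and that nothing else occurs. By Corollary~\ref{cor:simple} a minimal transitive subsemigroup is $0$-simple or simple; Corollary~\ref{cor:brandt} then splits the analysis into the non-group case (minimal transitive Brandt subsemigroups, classified via Proposition~\ref{prop:brandt} exactly as above, now with the extra constraint $|I|>1$ so that the semigroup is genuinely not a group) and the group case. In the group case a transitive subsemigroup of $\IS(X)$ that happens to be a group must consist of total bijections, hence lie in $\S(X)=\S_n$, and minimality among transitive subsemigroups coincides with minimality among transitive subgroups of $\S(X)$, since any transitive subsemigroup strictly inside such a group could only be a smaller subgroup by Corollary~\ref{cor:simple} (a simple transitive inverse subsemigroup is itself a group). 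I would also check the reverse inclusions: each $B(G;\cdot)$ with $G$ minimal transitive and each minimal transitive subgroup of $\S(X)$ is indeed a minimal transitive subsemigroup of $\IS(X)$, using that a transitive subsemigroup contained in a group is a subgroup.

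The main obstacle I expect is the cross-comparison between the two families in the finite case, that is, ruling out that a Brandt semigroup $B(G;M_i;\pi_i)_{i\in I}$ with $|I|>1$ could strictly contain, or be strictly contained in, a transitive subgroup of $\S(X)$, so that neither list item is redundant and neither fails minimality because of an omitted competitor. A transitive subgroup of $\S(X)$ consists of total maps while a Brandt semigroup with $|I|>1$ contains the zero and only partial maps of rank $|M_1|<n$, so the two kinds of minimal semigroups are incomparable under inclusion; I would make this observation explicit to conclude that minimality within each family yields minimality in $\IS(X)$, completing the classification.
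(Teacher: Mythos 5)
Your proposal follows essentially the same route as the paper's proof: Corollary~\ref{cor:brandt} combined with Items~\ref{i12}, \ref{i2} and \ref{i3} of Proposition~\ref{prop:brandt}, plus the observation that for infinite $X$ no subgroup of $\IS_{fin}(X)$ is transitive while for finite $X$ the minimal transitive subgroups of $\S(X)$ join the list, and your extra cross-comparison (rank $\frac{n}{k}<n$ versus rank $n$, and a subsemigroup of a finite group being a subgroup) is a harmless refinement the paper leaves implicit. One small slip: in the infinite case your justification that ``group elements are total bijections of $X$'' is incorrect in $\IS_{fin}(X)$, where no element is total; the paper's correct one-line argument is that any subgroup of $\IS_{fin}(X)$ is a subgroup of $\S(M)$ for some finite $M\subseteq X$ and hence acts on $M$ only, so it cannot act transitively on an infinite $X$.
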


\begin{proof}
Suppose first that $X$ is infinite. Then $\IS_{fin}(X)$ does not
possess transitive subsemigroups which are groups. To see this, we
note that any subgroup of $\IS_{fin}(X)$ is a subgroup of $\S(M)$,
where $M$ is a finite subset of $X$ and therefore such a subgroup
acts on $M$ only, which means that it can not act transitively on
$X$. Now the first claim follows from Corollary~\ref{cor:brandt}
and Items \ref{i12} and \ref{i3} of Proposition~\ref{prop:brandt}.

To prove the second claim we apply similar arguments. The only
difference with the previous case is that for finite $X$ there are
transitive subsemigroups which are groups - those are transitive
subgroups of $S(X)$.
\end{proof}

Let us look in more detail at minimal transitive subsemigroups of
$\IS_n$, which are not groups. Consider such a subsemigroup
$B(G;M_i;\pi_i)_{i\in I}$. Since $X$ is finite, so is also $I$.
Let $|I|=k$ and suppose $I=\{1,\dots, k\}$. We know that all
$M_i$-s have the same cardinality, therefore $|M_i|=\frac{n}{k}$,
$i\in I$ (in particular, $n$ is divisible by $k$). Let us write
$B(G;M_1,\dots, M_k; \pi_1,\dots\pi_k)$ for $B(G;M_i;\pi_i)_{i\in
I}$.

\begin{theorem}\label{th:tr_is_n}
Let $k$ be a divisor of $n$.
\begin{enumerate}
\item \label{iii1} All non-zero elements of $B(G;M_1,\dots, M_k;
\pi_1,\dots\pi_k)$ have rank $\frac{n}{k}$.
\item \label{iii2} The semigroup $B(G;M_1,\dots, M_k;
\pi_1,\dots\pi_k)$  with non-zero elements of rank
$\frac{n}{k}<n$ has cardinality $k^2x+1$, where $x=|G|$. It is a
minimal transitive subsemigroup of $\IS_n$.
\item \label{iii3} Let $t(k)$ be the number of minimal transitive
subgroups of $\S_k$ up to isomorphism. Then, up to isomorphism,
the number of minimal transitive subsemigroups of $\IS_n$ equals
$$\sum_{k\text{ divides }n}t(k).
$$
\end{enumerate}
\end{theorem}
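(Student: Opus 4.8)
The plan is to establish the three items in order, drawing heavily on the structural description of $B(G;M_1,\dots,M_k;\pi_1,\dots,\pi_k)$ developed in Proposition~\ref{prop:brandt} and the preceding discussion. For item~\ref{iii1}, I would simply observe that every non-zero element has the form $M(i,g,j)=\pi_i^{-1}g\pi_j$ with $g\in G\subseteq\S(M_1)$; since each $\pi_i$ is a bijection from $M_1$ to $M_i$ and $g$ is a permutation of $M_1$, the composite is a partial bijection with domain $M_i$ and range $M_j$, hence of rank $|M_1|=\frac{n}{k}$. This is a routine verification requiring no subtlety.

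For item~\ref{iii2}, the cardinality count is the main bookkeeping step. The non-zero elements of the Brandt semigroup are exactly the $M(i,g,j)$ for $(i,j)\in I\times I$ and $g\in G$; distinct triples give distinct partial permutations (here I would invoke that the $\pi_i$ are fixed bijections, so $M(i,g,j)=M(i',g',j')$ forces $i=i'$, $j=j'$, and then $g=g'$). Hence there are $k\cdot k\cdot|G|=k^2x$ non-zero elements, and adjoining the zero element gives cardinality $k^2x+1$. The minimality claim is where I would lean on the earlier results: by Corollary~\ref{cor:brandt} a minimal transitive subsemigroup that is not a group is precisely a minimal transitive Brandt subsemigroup, and by item~\ref{i12} of Proposition~\ref{prop:brandt} the ordering of such Brandt semigroups by inclusion mirrors the subgroup ordering $G<H$; transitivity corresponds (item~\ref{i2}) to $G$ being transitive on $M_1$. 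Thus minimality of the Brandt semigroup is equivalent to $G$ being a minimal transitive subgroup of $\S(M_1)\cong\S_k$, which is exactly the hypothesis, so the semigroup is minimal transitive in $\IS_n$.

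For item~\ref{iii3}, the strategy is to set up a bijection, up to isomorphism, between minimal transitive subsemigroups of $\IS_n$ and pairs consisting of a divisor $k\mid n$ together with an isomorphism class of minimal transitive subgroup of $\S_k$. Theorem~\ref{th:tr_char}\ref{ii2} already tells us that every minimal transitive subsemigroup of $\IS_n$ is either a minimal transitive subgroup of $\S(X)$ (the case $k=1$, matching $t(1)$) or of the form $B(G;M_i;\pi_i)_{i\in I}$ with $|I|=k>1$ and $G$ a minimal transitive subgroup of $\S(M_1)\cong\S_k$. The remaining task is to count these up to isomorphism. Here I would use the isomorphism remarks preceding Theorem~\ref{th:tr_char}: since $B(G;M_i;\pi_i)\simeq B(\pi_1^{-1}G\pi_1;M_i;\pi_1^{-1}\pi_i)$ and the various $B(G;M_i;\pi_i')$ are all isomorphic for different choices of the $\pi_i$, the isomorphism class depends only on $k$ and on the isomorphism class of $G$ as an abstract group (equivalently, as a transitive permutation group up to the relevant equivalence). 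Summing $t(k)$ over all divisors $k$ of $n$ then yields the stated total.

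The main obstacle will be making the isomorphism classification in item~\ref{iii3} precise, namely verifying that two minimal transitive Brandt subsemigroups $B(G;M_i;\pi_i)$ and $B(G';M_i';\pi_i')$ are isomorphic as semigroups exactly when they share the same index-set size $k$ and isomorphic maximal subgroups $G\cong G'$. One direction follows from the displayed isomorphisms already recorded in the text; the converse requires arguing that any semigroup isomorphism must preserve the rank, hence the common value $\frac{n}{k}$ and thus $k$, and must carry the maximal subgroup $G$ at an idempotent to the corresponding maximal subgroup $G'$, giving $G\cong G'$. This is where one must be careful that the count $t(k)$ is genuinely up to \emph{group} isomorphism rather than up to permutation-isomorphism, and I would confirm that the intended reading of $t(k)$ is consistent with the Brandt-semigroup isomorphism relation before concluding.
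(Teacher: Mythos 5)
Your overall route is the paper's route, written out in more detail: for item~\ref{iii1} you compute directly that $M(i,g,j)=\pi_i^{-1}g\pi_j$ has domain $M_i$ and range $M_j$, hence rank $\frac{n}{k}$ (the paper instead cites the fact, from the proof of Proposition~\ref{prop:brandt}, that all non-zero elements of a transitive Brandt subsemigroup share a rank); for item~\ref{iii2} you count $k^2|G|$ non-zero elements, and your verification that distinct triples $(i,g,j)$ give distinct partial permutations is a point the paper leaves implicit behind the abstract formula $|I|^2|G|+1$; for item~\ref{iii3} you expand what the paper dismisses as ``obvious.'' So the proposal is essentially correct in structure, but two steps need repair. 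First, a degree slip that runs through your write-up: with $|I|=k$ the blocks have cardinality $\frac{n}{k}$, so $\S(M_1)\cong\S_{n/k}$, \emph{not} $\S_k$; a minimal transitive Brandt subsemigroup with $k$ blocks corresponds to a minimal transitive subgroup of $\S_{n/k}$, and the summand $t(k)$ in item~\ref{iii3} corresponds to block \emph{size} $k$ (equivalently $\frac{n}{k}$ blocks), with $k=n$ giving the group case. Your final count survives only because $k\mapsto\frac{n}{k}$ permutes the divisors of $n$, but as stated your bijection attaches the wrong symmetric group to a given block count.

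Second, in item~\ref{iii3} your justification ``any semigroup isomorphism must preserve the rank'' is not a valid step: rank of a partial permutation is not an abstract semigroup invariant (a non-zero idempotent together with $0$ forms a two-element semilattice regardless of the idempotent's rank, so isomorphic subsemigroups of $\IS_n$ can consist of elements of different ranks). The correct invariants are the number of non-zero idempotents, which for $B(G;M_1,\dots,M_k;\pi_1,\dots,\pi_k)$ equals $k$ (the idempotents $M(i,e,i)$), and the maximal subgroups at those idempotents, which are copies of $G$; these yield the standard fact that $B(G,I)\cong B(G',I')$ if and only if $|I|=|I'|$ and $G\cong G'$ --- the same fact the paper invokes in its proof of item~\ref{iii2}, and it closes your argument in one line. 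A smaller remark on minimality in item~\ref{iii2}: item~\ref{i12} of Proposition~\ref{prop:brandt} compares only Brandt subsemigroups built from the \emph{same} partition and the \emph{same} bijections $\pi_i$, so by itself it does not exclude a transitive subsemigroup of $B(G;M_i;\pi_i)_{i\in I}$ with different data; you also need item~\ref{i3} (every transitive Brandt subsemigroup has the form $B(H;N_j;\sigma_j)$), the observation that containment forces the same partition, and the normalization remark preceding Theorem~\ref{th:tr_char} to replace the $\sigma_j$ by the $\pi_j$ --- this matches the citation pattern in the paper's own proof of Theorem~\ref{th:tr_char}, which quotes item~\ref{i3} alongside item~\ref{i12}.
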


\begin{proof} By Theorem~\ref{th:tr_char}, Item~\ref{ii2} we know
that $B(G;M_1,\dots, M_k; \pi_1,\dots\pi_k)$ has elements of rank
$\frac{n}{k}$. But all non-zero elements of $B(G;M_1,\dots, M_k;
\pi_1,\dots\pi_k)$ are of the same rank (see the third paragraph
of the proof of Proposition~\ref{prop:brandt}), which
proves~\ref{iii1}.

Brandt semigroups with isomorphic groups and equicardinal index
sets are isomorphic. Besides, the cardinality of a Brandt
semigroup with the index set I and group $G$ is given by
$|I|^2|G|+1$, which implies~\ref{iii2}.

The statement \ref{iii3} is obvious.
\end{proof}

We conclude this section by an example of a minimal transitive
subsemigroup of $\IS_{8}$.
\begin{example}{\em
Let $k=2$. Choose a partition of $\{1,\dots, 8\}$ into two
$4$-element subsets, for example, $\{1,2,3,4\}$, $\{5,6,7,8\}$.
Denote the first set by $M_1$, and the second one by $M_2$. Now we
choose a cyclic subgroup of order $4$ in $\S(M_1)$, for example,
we take $G=\langle (1,2,3,4)\rangle$. Choose a bijection $\pi_2:
M_1\to M_2$, for example, $\pi_2(i)=i+4$, $i=1,2,3,4$. We obtain
the semigroup $B(G; M_1, M_2; e, \pi_2)$, which has $4\cdot
2^2+1=17$ elements which are listed below using the cycle-chain
notation (see~\cite{GM} and the references therein):
$$\begin{array}{ll}(1,2,3,4)5]6]7]8];&
(1,3)(2,4)5]6]7]8];\\ (1,4,3,2)5]6]7]8];&
(1)(2)(3)(4)5]6]7]8];\\
(1,5](2,6](3,7](4,8]; &(1,6](2,7](3,8](4,5];\\
(1,7](2,8](3,5](4,6]; &(1,8](2,5](3,6](4,7];\\
(5,1](6,2](7,3](8,4]; &(6,1](7,2](8,3](5,4];\\
(7,1](8,2](5,3](6,4]; &(8,1](5,2](6,3](7,4];\\
(5,6,7,8)1]2]3]4];& (5,7)(6,8)1]2]3]4];\\
(5,8,7,6)1]2]3]4];& (5)(6)(7)(8)1]2]3]4]\\
\text{ and } 0.
\end{array}
$$}
\end{example}
\section{Classification of semitransitive sub\-se\-mi\-groups of $\IS_n$ of
the mi\-ni\-mal car\-di\-na\-li\-ty}

In this section we switch to semitransitive subsemigroups of
$\IS_{fin}(X)$. As any transitive subsemigroup is automatically
semitransitive, and transitive subsemigroups have been dealt with
in the previous section, we can limit our attention to
semitransitive subsemigroups of $\IS_{fin}(X)$, which are not
transitive.

Let $S$ be a semigroup with the zero element $0$. A non-zero
element $a\in S$ is called {\em nilpotent} provided that some
power of $a$ equals $0$.

In what follows let $0$ stand for the zero element of
$\IS_{fin}(X)$, that is $0$ is the nowhere defined partial
permutation of $X$.

\begin{proposition}\label{pr:semitr_zero_nilp}
A semitransitive subsemigroup of $\IS_{fin}(X)$, that is not
transitive, contains $0$ and a nilpotent element.
\end{proposition}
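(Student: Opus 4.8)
The plan is to prove the contrapositive-style characterization: I want to show that a semitransitive subsemigroup $S$ of $\IS_{fin}(X)$ which fails to contain $0$, or which contains no nilpotent, must actually be transitive. The cleanest approach separates the two claims. First I would establish that $S$ contains $0$. Suppose for contradiction that $0\notin S$. Semitransitivity applied to a pair $(x,x)$ merely gives an element fixing some point, but the real leverage comes from applying semitransitivity to pairs $(x,y)$ with $x\neq y$: for each such pair there is $\varphi\in S$ with $x\varphi=y$ or $y\varphi=x$. The idea is that if $0\notin S$, then every product of elements of $S$ is again a nonzero partial permutation, and I would argue this forces enough maps to exist in both directions between every pair of points to recover full transitivity. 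The subtlety is that semitransitivity only guarantees \emph{one} direction for each unordered pair, so I must bootstrap: given $\varphi$ with $x\varphi=y$, I need to manufacture an element sending $y$ back toward $x$, using that no product collapses to $0$.

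For the nilpotent claim, I would argue as follows. Assuming $0\in S$ (now available from the first part), suppose toward a contradiction that $S$ contains no nilpotent element. I would look at an element $\varphi\in S$ witnessing semitransitivity for a pair $(x,y)$ where transitivity genuinely fails, say $x\varphi=y$ but no element of $S$ sends $y$ to $x$. Consider the behavior of $\varphi$ under iteration on its domain: since $\varphi$ is a finitary partial permutation, restricting to the points that remain in the domain under repeated application, $\varphi$ decomposes into cycles and chains (as in the cycle-chain notation used in the example). If $\varphi$ had no chains, i.e.\ were a genuine permutation of its domain, then some power of $\varphi$ would act as the identity on $\dom\varphi$, which I expect to contradict the failure of transitivity between $x$ and $y$. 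Hence $\varphi$ must have a nontrivial chain, and I would exploit that chain to produce a nilpotent: the restriction of a suitable power of $\varphi$ to the chain portion eventually maps everything out of the domain, giving an element some power of which is $0$.

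The main obstacle I anticipate is the first claim, that $0\in S$. The difficulty is that semitransitivity is a genuinely weaker and more asymmetric hypothesis than transitivity, and turning ``for each pair, at least one direction'' into the existence of the zero element requires carefully combining witnesses for different pairs. Concretely, I would need to track how domains and ranges of products shrink: if the semigroup is nontransitive there is a pair $(x,y)$ for which, say, only the map $x\mapsto y$ exists and never $y\mapsto x$; I would compose such a ``one-way'' witness with other elements in a way that strictly decreases rank, and the absence of $0$ would then yield a descending chain of positive ranks that cannot terminate, a contradiction. Making this rank-descent argument precise — ensuring the relevant products are nonzero yet of strictly smaller rank — is where the real work lies, and it parallels the rank-reduction arguments already used in the proof of Proposition~\ref{pr:min_tr}. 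Once $0\in S$ is secured, the nilpotent then falls out of the chain analysis described above.
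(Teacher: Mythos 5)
There is a genuine gap, and it sits exactly where you locate ``the real work'': you never supply the mechanism that makes the rank descent go through, and that mechanism is the heart of the paper's proof. The paper's device is an \emph{idempotent sandwich}. Assuming $0\notin S$, every power of any element of $S$ is non-zero, so an element of minimal positive rank can have no chains in its cycle-chain decomposition (a chain would make ranks of powers strictly decrease), whence some power of it is a non-zero idempotent $\psi$ of minimal rank among all non-zero elements of $S$. Writing $Y=\dom\psi=\ran\psi$ and picking $i\in Y$, $j\in X\setminus Y$, the semitransitivity witness $\alpha$ for the pair $(i,j)$ crosses the boundary of $Y$, so $\psi\alpha\psi$ loses $i$ from its domain or range: $0<\rank(\psi\alpha\psi)<\rank\psi$ (non-zero precisely because $0\notin S$), contradicting minimality. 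Two further points your sketch omits: (i) this needs $Y\neq X$, i.e.\ that $S$ contains a non-zero element of rank $<|X|$; for finite $X$ this requires the separate preliminary step that otherwise $S\setminus\{0\}$ would be a subsemigroup of the finite group $\S(X)$, hence a semitransitive sub\emph{group}, hence transitive --- contradicting the hypothesis; (ii) a strictly descending chain of positive integer ranks always terminates, so ``cannot terminate'' is not where the contradiction can live --- the actual contradiction is that the sandwich produces a non-zero element strictly below the minimum, i.e.\ a well-ordering argument, not an infinite-descent one. Also, your framing around a one-way pair (``only $x\mapsto y$, never $y\mapsto x$'') plays no role in the paper's argument; all that is used is an element of small rank plus semitransitivity across $Y$.

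The nilpotent half of your plan contains a concrete error: ``the restriction of a suitable power of $\varphi$ to the chain portion'' is an element of $\IS(X)$ but there is no reason whatsoever that it belongs to $S$, and if $\varphi$ has any cycle --- which is exactly the case your argument must handle --- then \emph{no} power of $\varphi$ is nilpotent, so powers of $\varphi$ alone can never yield the nilpotent, and $S$ need not contain anything that cuts the cycles away. Your preliminary observation (failure of transitivity for $(y,x)$ forces $x$ to lie on a chain of $\varphi$, since on a cycle some power of $\varphi$ would return $y$ to $x$) is correct but does not finish the job. The paper instead extracts the nilpotent from the same pair $\psi,\alpha$: since $\alpha$ has an arrow meeting $Y$, at least one of $\alpha\psi$, $\psi\alpha$ is non-zero, while minimality of $\rank\psi$ now forces $\psi\alpha\psi=0$ (its rank is strictly below the minimum, and $0\in S$ is available); hence $(\alpha\psi)^2=\alpha(\psi\alpha\psi)=0$, respectively $(\psi\alpha)^2=(\psi\alpha\psi)\alpha=0$. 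So the nilpotent does not ``fall out'' of the chain analysis once $0\in S$ is secured --- it requires the minimal-rank idempotent a second time, which is the very tool missing from your first part as well.
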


\begin{proof}
Let $S$ be a semitransitive subsemigroup of
$\IS_{fin}(X)$ that is not transitive. Then $S$ contains an element of a positive rank
strictly less than $|X|$. This is obvious for infinite $X$.
Suppose $X$ is finite, and $S$ contains only elements of rank
$|X|$ and possibly $0$. Then $S$ is a subsemigroup of $\IS(X)$ with
possibly adjoint $0$. But a subsemigroup of a finite group is in
fact a group. Then $S=H$ or $S=H\cup \{0\}$ where $H$ is a
semitransitive subgroup of $\IS(X)$. This implies that $S$ is
transitive, which contradicts the assumption.

Suppose $S$ does not contain $0$. Let $\varphi\in S$ be a non-zero
element and $\rank\varphi<|X|$. Then some power of $\varphi$ is a
non-zero idempotent, which also belongs to $S$ and has rank less
or equal than that of $\varphi$. Let $\psi\in S$ be a non-zero
idempotent with the minimum possible rank (among all elements of
$S$). Let $Y=\dom\psi=\ran\psi$. Set $Z=X\setminus Y$. Since both
$Y$ and $Z$ are non-empty, we can choose some $i\in Y$ and $j\in
Z$. Let $\alpha$ be the element in $S$ which maps either $i$ to
$j$ or vice versa. Then $0<\rank(\psi\alpha\psi)<\rank\psi$. The
obtained contradiction shows, that $S$ must contain $0$.

Finally, we show that $S$ contains a nilpotent element. Let $\psi$
and $\alpha$ be as in the preceding paragraph. By
construction, we have $Y\cap\dom\alpha\neq\varnothing$ or
$Y\cap\ran\alpha\neq\varnothing$, implying that at least one of
the elements $\alpha\psi$ or $\psi\alpha$ is non-zero. Suppose
$\alpha\psi\neq 0$ (the other case is treated similarly). Since
$\rank(\psi\alpha\psi)<\rank\psi$, it follows that
$\psi\alpha\psi=0$. Then
$(\alpha\psi)^2=\alpha(\psi\alpha\psi)=\alpha\cdot 0=0,$ ensuring
that $\alpha\psi$ is a nilpotent element.
\end{proof}

Let $S$ be a subsemigroup of $\IS(X)$. An element $i\in X$ is
called {\em cyclic with respect to $S$} provided that for every
$j\in X$ there is $\varphi_j\in S$ such that $i\varphi_j=j$. In
particular, if $S$ is transitive then every $i\in X$ is cyclic.

We choose $i\in X$. Then $iS^1$ is the set $\{i\}\cup iS$, that is, the set
of all those elements of $X$ where $i$ can be mapped by partial
permutations from $S$ and by the identity map. For $i,j\in X$ set
$i\leq_r j$ if there is $\varphi\in S$ such that
$i\varphi=j$. The latter is true if and only if the inclusion
$iS^1\supseteq jS^1$ holds. If $S$ is a semitransitive
subsemigroup then the relation $\leq_r$ is a linear preorder on
$X$. This implies that the relation $r$ on $X$ defined via $i r j$
if and only if $i\leq_r j$ and $j\leq_r i$ is an equivalence
relation. Moreover, the equivalence classes are naturally linearly
ordered by the order induced by $\leq_r$ which we denote just by
$\leq$. Suppose, $X$ is finite and $M_1, \dots, M_k$ are all the
equivalence classes with respect to $r$ and $M_1>M_2>\cdots >
M_k$. In particular, $M_1$ is the set of all elements that are cyclic with respect
to $S$. The action of $S$ on each $M_i$ is transitive and for each pair $i,j$ with
$i<j$ there is no element in $S$ that maps an element of $M_j$ to an element of $M_i$.
Therefore, for each $x\in M_i$ and $y\in M_j$ there is an element $\varphi$ in $S$
such that $y=x\varphi$ by semitransitivity.

\begin{example}{\em If $X$ is infinite, there are semitransitive subsemigroups
of $\IS_{fin}(X)$ without cyclic elements. For instance, take $X={\mathbb
Z}$. For every $i,j\in X$ with $i\leq j$ let $\varphi_{i,j}$ be
the element of rank $1$ which maps $i$ to $j$. All the elements
$\varphi_{i,j}$, together with $0$, form a semitransitive
subsemigroup of $\IS_{fin}(X)$. However, with respect to this
semigroup there is no cyclic element in $X$.}\end{example}

By the reason above in what follows we restrict our attention to
the case when the set $X$ is finite. We assume that $X=\{1,2\dots,
n\}$  and write $\IS_n$ for $\IS(X)$.

The discussion above leads to the following lower bound on the
cardinality of a semitransitive subsemigroup of $\IS_n$.

\begin{proposition}
The cardinality of a semitransitive but not transitive
subsemigroup of $\IS_n$ is greater than or equal to $n+1$.
\end{proposition}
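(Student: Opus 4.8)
The plan is to exhibit $n+1$ pairwise distinct elements of $S$, most of them coming from a single cyclic source point. The key input is the existence of at least one cyclic element: by the discussion preceding the statement, for finite $X=\{1,2,\dots,n\}$ the relation $\leq_r$ is a linear preorder, its top equivalence class $M_1$ is non-empty (as $X$ is), and $M_1$ is exactly the set of elements cyclic with respect to $S$. So I would begin by fixing a cyclic element $i_0\in M_1$.

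Next, by the definition of cyclicity, for every $j\in X$ there is some $\varphi_j\in S$ with $i_0\varphi_j=j$; this produces a family $\{\varphi_j : j\in X\}$ of $n$ elements of $S$. I would then check that these are pairwise distinct and all non-zero. Distinctness is immediate: if $j\neq j'$ then $i_0\varphi_j=j\neq j'=i_0\varphi_{j'}$, forcing $\varphi_j\neq\varphi_{j'}$. Non-vanishing is equally immediate, since $i_0\in\dom\varphi_j$ shows $\varphi_j\neq 0$. Finally, by Proposition~\ref{pr:semitr_zero_nilp} the semigroup $S$, being semitransitive but not transitive, contains the zero element $0$. As $0$ is distinct from every $\varphi_j$, the set $\{0\}\cup\{\varphi_j : j\in X\}$ has $n+1$ distinct elements, whence $|S|\ge n+1$.

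The argument is essentially a counting argument resting on one structural fact, so there is no serious obstacle; the only point demanding care is to account for $0$ separately from the $n$ maps $\varphi_j$, which is precisely why I invoke Proposition~\ref{pr:semitr_zero_nilp} to guarantee $0\in S$. Were one to forget this, the naive count would yield only $|S|\ge n$, so the separation of the zero element from the non-zero source maps is exactly what sharpens the bound to $n+1$.
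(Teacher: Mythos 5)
Your proof is correct and follows essentially the same route as the paper: fix a cyclic element (guaranteed by the preceding discussion of the top $r$-class $M_1$), obtain $n$ distinct non-zero elements of $S$ sending it to each point of $X$, and add the zero element supplied by Proposition~\ref{pr:semitr_zero_nilp} to reach the bound $n+1$. The paper's proof is a terser version of exactly this counting argument, and your explicit verification of distinctness and non-vanishing of the $\varphi_j$ fills in details the paper leaves implicit.
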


\begin{proof}
Let $S$ be a semitransitive (and not transitive) subsemigroup of
$\IS_n$, and $i\in X$ an element that is cyclic with respect to $S$.
Then $S$ must contain elements that send $i$ to $1,2,\dots, n$.
Besides, $S$ contains $0$ by
Proposition~\ref{pr:semitr_zero_nilp}. It follows that the
cardinality of $S$ should be at least $n+1$.
\end{proof}

Now, our goal is to provide a construction which gives examples of
semitransitive and not transitive subsemigroups of $\IS_n$ of
cardinality $n+1$, and then to prove that every such a
subsemigroup is equal to one already constructed.

Let $k>1$ be a divisor of $n$. Consider a partition
$X=M_1\cup\dots\cup M_k$, such that $M_i\cap M_j=\varnothing$
whenever $i\neq j$, and $\left| M_i \right| = \frac{n}{k}$ for every $i$. Suppose
that $M_i=\{a_{i,1},\dots a_{i,\frac{n}{k}}\}$. Let $G$ be some
transitive permutation group acting on $M_1$. Fixing the bijection
from $M_1$ to $M_i$, sending $a_{1,j}$ to $a_{i,j}$, $1\leq j\leq
\frac{n}{k}$, $2\leq i\leq k$, one diagonally extends the action
of $G$ to the whole $X$. Consider the chain $(a_{1,1}, a_{2,1}
\dots, a_{k,1}]\in \IS(\{a_{1,1},\dots, a_{k,1}\})$. It generates
a nilpotent semigroup $T$ consisting of $k+1$ elements. We extend
the action of $T$ to $X$ also diagonally using the bijections
sending $a_{j,1}$ to $a_{j,i}$, $2\leq i\leq k$, $1\leq j\leq
\frac{n}{k}$. Consider the subsemigroup $S$ of $\IS_n$ generated
by $G$ and $T^1$. Obviously, $S$ is semitransitive. Since the
actions of $G$ and $T^1$ on $X$ commute, and no nonzero $s\in T$
acts in the same way as some $gs_1$, $g\in G, g\neq e$, $s_1\in
T^1$, it follows that $S$ is equal to the Rees factor $(G\times
T^1)/I$, where the ideal $I$ consists of all elements $(g,0)$,
$g\in G$.

\begin{example}{\em Let $n=8$ and $k=4$. We choose $M_1=\{1,2\}$, $M_2=\{3,4\}$,
$M_3=\{5,6\}$ and $M_4=\{7,8\}$. Suppose that $G=\{e,g\}$, where
$g=(1,2)(3,4)(5,6)(7,8)$, and $T=\{\varphi, \varphi^2,\varphi^3,0\}$,
where $\varphi=(1,3,5,7](2,4,6,8]$. The semigroup $(G\times T^1)/I$
consists of the following elements:
$$\begin{array}{ll}
(e,1)=(1)(2)(3)(4)(5)(6)(7)(8);& (g,1)=(1,2)(3,4)(5,6)(7,8)\\
(e,\varphi)=(1,3,5,7](2,4,6,8];& (g,\varphi)=(1,4,5,8](2,3,6,7]\\
(e,\varphi^2)=(1,5](3,7](2,6](4,8];& (g,\varphi^2)=(1,6](2,5](3,8](4,7]\\
(e,\varphi^3)=(1,7](2,8]3]4]5]6]:& (g,\varphi^3)=(1,8](2,7]3]4]5]6]\\
\text{ and } 0.
\end{array}
$$}
\end{example}

\begin{theorem}\label{th_classif_semitr_min_card}
Let $S$ be a semitransitive but not transitive subsemigroup of
$\IS_n$ of cardinality $n+1$. Then the action of $S$ on $X$
coincides with the action of some $(G\times T^1)/I$ given in the
construction above.
\end{theorem}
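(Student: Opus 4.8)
The plan is to exploit the extreme tightness of the bound $|S|=n+1$. Fix $i\in M_1$, which is cyclic with respect to $S$. For every $y\in X$ there is a nonzero $\varphi\in S$ with $i\varphi=y$, and distinct targets force distinct elements; together with $0\in S$ (Proposition~\ref{pr:semitr_zero_nilp}) this already exhibits $n+1$ elements. Since $|S|=n+1$, these are \emph{all} of $S$: the map $\mu\colon S\setminus\{0\}\to X$, $\mu(\varphi)=i\varphi$, is a bijection, so $S\setminus\{0\}=\{\varphi_y\mid y\in X\}$ with $i\varphi_y=y$, and in particular $i\in\dom\varphi$ for every nonzero $\varphi$. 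The same counting applies verbatim with any cyclic point as basepoint.

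Next I would show that $S$ is a submonoid of $\IS_n$ containing the full identity. Set $e=\varphi_i$. From $ie^2=i$ and injectivity of $\mu$ we get $e^2=e$, so $e=\mathrm{id}_Y$ with $Y=\dom e=\ran e\ni i$. For nonzero $\varphi$ one has $e\varphi\neq 0$ and $i(e\varphi)=i\varphi$, hence $e\varphi=\varphi$; comparing domains yields $\dom\varphi\subseteq Y$ for \emph{every} $\varphi$. Now semitransitivity on the diagonal pair $(x,x)$ gives, for each $x\in X$, some $\varphi$ with $x\varphi=x$, so $x\in\dom\varphi\subseteq Y$; therefore $Y=X$ and $e=\mathrm{id}_X$ is a two-sided identity. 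Consequently $e$ is the unique nonzero idempotent, every nonzero element is either a permutation of $X$ (a unit) or nilpotent, and $S=H\sqcup N$, where $H$ is the group of units and $N$ (the rank-$<n$ elements together with $0$) is a nilpotent ideal.

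Then I would isolate the group part. A unit $h$ preserves $\leq_r$ (conjugating a witness $s$ of $x\leq_r y$ by $h$ witnesses $xh\leq_r yh$), so $H$ permutes the chain $M_1>\cdots>M_k$ order-preservingly and hence fixes each block setwise; thus $H$ acts on every $M_p$. Any element carrying a cyclic point to a cyclic point is a unit: otherwise a nilpotent $\psi$ sending cyclic $a$ to cyclic $b$, composed with a map $\rho$ returning $b$ to $a$, would make $\psi\rho$ fix the cyclic point $a$, forcing $\psi\rho=\mathrm{id}_X$, whereas $\rank(\psi\rho)\le\rank\psi<n$. Hence $H$ is transitive on $M_1$, and as $|H|=|\mu^{-1}(M_1)|=|M_1|=:m$, this action is \emph{regular}. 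So $G:=H|_{M_1}$ is a transitive group of minimal cardinality on $M_1$ and $H\cong G$.

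It remains to erect the grid $X\cong M_1\times\{1,\dots,k\}$ behind the construction: all blocks of size $m$ (so $k=n/m\mid n$), $H$ regular on each block, and a single nilpotent $c$ with $M_pc=M_{p+1}$ commuting with $H$; then $N\setminus\{0\}=\{hc^{\,l}\}$ and the action of $S$ coincides with $(G\times T^1)/I$ by the counting above. This is where I expect the genuine difficulty. Left multiplication by $H$ acts freely on $S\setminus\{0\}$ with orbits of size $m$ made of equal-rank elements (since $\rank(h\varphi)=\rank\varphi$ and $ih\varphi$ separates $h$), so $\mu$ transports these orbits to a partition $\mathcal P$ of $X$ into $m$-element sets, one of which is $M_1$. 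Everything reduces to the uniformity claim that each $\varphi\in S$ maps all of $M_1$ into a \emph{single} block (equivalently, $\mathcal P$ refines into the blocks). Granting this, together with freeness of $H$ on $X$, a short argument shows $H$ is regular on every block: if $y\psi=y'$ within a block, a return map $\rho$ makes $\psi\rho$ a unit fixing $y$, hence $\psi\rho=\mathrm{id}_X$ and $\psi$ is itself a unit; thus $|M_p|=m$ and the blocks are exactly the $H$-orbits $=\mathcal P$, and the commuting shift $c$ is read off the grid. I expect this uniformity/freeness kernel to be the only hard point, and to need the full strength of semitransitivity — that for all $x\in M_p$, $y\in M_q$ with $p<q$ some element sends $x$ to $y$ — played against the rigidity of the regular $H$-action and the rank function, rather than the soft orbit-counting used so far.
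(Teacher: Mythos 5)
Your preparatory steps are correct, and some are genuinely nice: the counting that makes $\mu\colon\varphi\mapsto i\varphi$ a bijection from $S\setminus\{0\}$ onto $X$ for a cyclic basepoint $i$, the deduction that $\mathrm{id}_X\in S$ and is the unique non-zero idempotent, the splitting $S=H\sqcup N$ into the group of units and an ideal of nilpotents, and the regularity of $H$ on $M_1$ (your counting is essentially the paper's proof of Lemma~\ref{lem_dom_ran}, specialized to one basepoint). But the proof stops exactly where the theorem lives, and you say so yourself (``Granting this\dots'', ``I expect this\dots to be the only hard point''). What is deferred, with no proof mechanism offered, is the entire grid structure: that each element of $S$ maps $M_1$ into a single $r$-class, that $m_1=\cdots=m_k$, that $H$ acts freely on all of $X$, and that a single nilpotent $c$ with $M_pc=M_{p+1}$ commutes with $H$ and generates $N$ over it. Moreover, the ``short argument'' you sketch for regularity of $H$ on every block is circular in a concrete way: from ``$\psi\rho$ fixes $y$'' you conclude $\psi\rho=\mathrm{id}_X$, but evaluation at $y$ is injective on $S\setminus\{0\}$ only for \emph{cyclic} $y$ --- that was precisely the content of $\mu$'s bijectivity --- and for $y\in M_p$ with $p>1$ it cannot be forced by your counting (in the target structure itself, $y$ lies outside the domains of the high powers of the nilpotent). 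Likewise ``freeness of $H$ on $X$'' is invoked but never established, and establishing it essentially requires the grid you are trying to build.

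For comparison, the paper fills exactly this hole with three lemmas and an induction on $k$. Lemma~\ref{lem_dom_ran} gives $M_1\subseteq\dom(\varphi)$ and $M_k\subseteq\ran(\varphi)$ for every non-zero $\varphi$ (your counting, run also dually at a point of $M_k$, which your proposal never does). Lemma~\ref{lem_inv_action} shows each non-zero $\varphi$ either fixes $M_1$ and $M_k$ setwise or has $\ran(\varphi)\cap M_1=\dom(\varphi)\cap M_k=\emptyset$, proved by a cycle-chain analysis of powers of $\varphi$ played against Lemma~\ref{lem_dom_ran}. Lemma~\ref{lem_special_elem} produces $\varphi$ with $\dom(\varphi)=M_1$ and $\ran(\varphi)=M_k$, whence $m_1=m_k$. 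The induction step then truncates: erasing the last entries of all chains defines a homomorphism $\gamma\colon S\to\IS(X\setminus M_k)$, and counting the at least $m_k$ elements with domain $M_1$ and range $M_k$ that $\gamma$ collapses to zero shows $|S\gamma|\le|X\setminus M_k|+1$, so $S\gamma$ is again of minimal cardinality and the induction hypothesis delivers equicardinality of the blocks, the group $G$, and the shift $c$ as a $\gamma$-preimage of the smaller nilpotent. Your monoid/units framework could in principle be completed, but it would need substitutes for these three lemmas plus such a descent mechanism; as it stands, the core of the theorem is conjectured rather than proved.
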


We prove Theorem~\ref{th_classif_semitr_min_card} in several
steps. First of all, fix a semitransitive (but not transitive)
subsemigroup of $\IS_n$. We consider the relation $r$ on $X$, and
the order $M_1>\cdots
>M_k$ on the $r$-classes. Note that $k>1$ since $S$ is not transitive.
Denote the cardinality of $M_i$
by $m_i$, $1\leq i\leq k$.

\begin{lemma}\label{lem_dom_ran} For every non-zero $\varphi \in
S$ we have the inclusions $M_1\subseteq \dom(\varphi)$,
$M_k\subseteq \ran(\varphi)$.
\end{lemma}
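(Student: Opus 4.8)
The plan is to exploit the exact count $|S| = n+1$, which leaves precisely $n$ non-zero elements, and to run two dual counting arguments, one for each inclusion. Throughout I write $N = S \setminus \{0\}$, so $|N| = n$, and I use repeatedly that every element of $\IS_n$ is an injective partial map and that $0$ is nowhere defined. The recurring combinatorial principle will be the trivial squeeze: a set $A \subseteq N$ that surjects (under some assignment) onto an $n$-element set must have at least $n$ elements, hence exactly $n$, hence $A = N$.

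For the inclusion $M_1 \subseteq \dom(\varphi)$, I would fix an arbitrary cyclic element $i \in M_1$ and consider the assignment $\varphi \mapsto i\varphi$ on the set $A_i = \{\varphi \in N : i \in \dom(\varphi)\}$. Cyclicity of $i$ says that for every $j \in X$ some element of $S$ sends $i$ to $j$, and such an element is necessarily non-zero, so this assignment is surjective onto $X$. Since $A_i \subseteq N$ and $|X| = n = |N|$, the squeeze forces $A_i = N$; that is, every non-zero $\varphi$ is defined at $i$. As $i \in M_1$ was arbitrary, $M_1 \subseteq \dom(\varphi)$ for all non-zero $\varphi$. This half is essentially immediate from cyclicity.

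For $M_k \subseteq \ran(\varphi)$ I would argue dually, fixing $j \in M_k$ and considering, on the set $B_j = \{\varphi \in N : j \in \ran(\varphi)\}$, the assignment sending $\varphi$ to the unique $x$ with $x\varphi = j$ (uniqueness being injectivity of $\varphi$). To see this assignment is surjective onto $X$, I harvest the structural facts established just before the lemma: for $x$ in the bottom class $M_k$, transitivity of the action on $M_k$ supplies a non-zero element carrying $x$ to $j$; for $x \in M_a$ with $a < k$, the relation $M_a > M_k$ supplies, by semitransitivity, a non-zero element carrying $x$ down to $j$. Thus every $x \in X$ is hit, the assignment surjects from $B_j \subseteq N$ onto $X$, and the same squeeze gives $B_j = N$. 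Hence $j \in \ran(\varphi)$ for every non-zero $\varphi$, and letting $j$ range over $M_k$ yields $M_k \subseteq \ran(\varphi)$.

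The only real content is the cardinality squeeze together with correctly assembling the reachability facts: cyclicity of every element of $M_1$ for the domain half, and downward reachability into the bottom class $M_k$ for the range half. I expect the range half to be the more delicate of the two, precisely because its surjectivity input is not a single hypothesis but a combination of within-class transitivity on $M_k$ with the chain order $M_1 > \cdots > M_k$; once that reachability is in hand, no genuinely hard step remains, as everything follows from $|N| = n$.
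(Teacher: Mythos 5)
Your proof is correct and rests on the same counting squeeze as the paper's: cyclicity of each $i\in M_1$ (dually, reachability of each $j\in M_k$ from every $x\in X$, via transitivity within $M_k$ together with the chain order on the $r$-classes) forces at least $n$ distinct non-zero elements of $S$, which by $|S|=n+1$ must be all of them. The only difference is organizational: you argue pointwise, fixing one $i$ (resp.\ $j$) and observing that $\varphi\mapsto i\varphi$ (resp.\ $\varphi\mapsto j\varphi^{-1}$) surjects onto $X$, whereas the paper aggregates arrows over all of $M_1$ and divides by $m_1$; your version is, if anything, slightly cleaner and usefully makes explicit the dual reachability facts that the paper compresses into ``the second inclusion is established in the same fashion.''
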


\begin{proof}
Let $\varphi\in S$ and $i,j\in X$. We will say that $\varphi$ {\it
has an arrow} from $i$ to $j$ provided that $i\in\dom(\varphi)$
and $i\varphi=j$. Every element of $M_1$ is cyclic, thus $S$
contains at least $n$ arrows from $i$ in total for every $i\in
M_1$. Therefore there are at least $m_1\cdot n$ arrows from the
elements of $M_1$ if we run through all elements of $S$. But an
element in $S$ contains at most $m_1$ arrows from $M_1$, which
implies that there are at least $\frac{m_1\cdot n}{m_1}=n$
elements in $S$ which have some arrows from $M_1$. Since $S$ has
precisely $n$ non-zero elements ($S$ does have the zero by
Proposition~\ref{pr:semitr_zero_nilp}), it follows that every
non-zero element in $S$ should have $m_1$ arrows from $M_1$. This
means that $M_1\subseteq \dom(\varphi)$ for every $\varphi \in S$.
The second inclusion is established in the same fashion.
\end{proof}

\begin{lemma}\label{lem_inv_action} Let $\varphi\in S$. Then
either $M_1 \varphi =M_1$ and $M_k\varphi= M_k$ or $
M_1\cap\ran(\varphi)=\varnothing$ and
$M_k\cap\dom(\varphi)=\varnothing$. Moreover, $M_1 \varphi =M_1$
holds if and only if $M_k\varphi= M_k$ holds, and
$M_1\cap\ran(\varphi)=\varnothing$ holds if and only if
$M_k\cap\dom(\varphi)=\varnothing$ holds.
\end{lemma}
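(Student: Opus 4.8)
The plan is to reduce the whole statement to a single monotonicity-plus-rigidity fact about how $\varphi$ moves the $r$-classes, and then read off both the dichotomy and the two equivalences. First I would record the monotonicity already implicit in the preorder $\leq_r$: since no element of $S$ maps a point of $M_j$ to a point of $M_i$ when $i<j$, every nonzero $\varphi\in S$ is \emph{index non-decreasing}, i.e.\ $\ind(x\varphi)\geq\ind(x)$ for all $x\in\dom(\varphi)$, where $\ind(x)=i$ means $x\in M_i$. Combined with Lemma~\ref{lem_dom_ran} (so $M_1\subseteq\dom(\varphi)$ and $M_k\subseteq\ran(\varphi)$) and the injectivity of $\varphi$, this yields two bookkeeping facts: the full preimage of $M_1$ lies in $M_1$, whence $\ran(\varphi)\cap M_1=M_1\varphi\cap M_1$; and the part of $M_k$ lying in $\dom(\varphi)$ is mapped into $M_k$. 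Thus the first alternative of the lemma amounts to $M_1\varphi\cap M_1=M_1$ and the second to $M_1\varphi\cap M_1=\varnothing$, and similarly at the bottom, so everything reduces to an \emph{all-or-nothing} statement for $M_1$ (and its dual for $M_k$), together with the claim that the two ``all'' cases coincide.

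The tool for the rigidity is the minimal cardinality. Because $|S|=n+1$, there are exactly $n$ nonzero elements, and for each cyclic $i\in M_1$ the assignment $\varphi\mapsto i\varphi$ is a \emph{bijection} from the nonzero elements of $S$ onto $X$ (each target is attained by cyclicity, and both sets have $n$ elements). Dually, for each $z\in M_k$ the assignment sending $\varphi$ to the unique preimage of $z$ under $\varphi$ is a bijection onto $X$. I would use these two bijections to pin down the orbits.

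The crux — and the step I expect to be the main obstacle — is to upgrade ``some point of $M_1$ returns to $M_1$'' to ``$M_1\varphi=M_1$'', equivalently to show that each nonzero $\varphi$ shifts the index by a constant. Monotonicity alone is \emph{not} enough here: one can write down a single injective, index-non-decreasing partial map that sends part of $M_1$ back into $M_1$ and part strictly upward, so the exact budget of $n$ elements must enter. The plan is to assume that $\varphi$ splits $M_1$ in this way and then, composing $\varphi$ on either side with suitable elements that act within $M_1$ (available by transitivity of $S$ on $M_1$), to produce two distinct nonzero elements of $S$ that agree at a cyclic point, contradicting the bijection $\varphi\mapsto i\varphi$; this forces the constant shift.

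Once the constant shift is in hand the rest is immediate. A shift of $0$, together with the count, forces $\varphi$ to be a permutation of $X$, and an index-non-decreasing permutation conserves the total index and hence preserves \emph{every} class, giving both $M_1\varphi=M_1$ and $M_k\varphi=M_k$. A shift of at least $1$ pushes all of $M_1$ into higher-index classes, so $\ran(\varphi)\cap M_1=\varnothing$, and leaves $M_k$ out of the domain, so $M_k\cap\dom(\varphi)=\varnothing$. The two equivalences in the ``moreover'' clause then follow because ``shift $0$'', ``permutation of $X$'', ``$M_1\varphi=M_1$'' and ``$M_k\varphi=M_k$'' all coincide, while ``shift $\geq 1$'' simultaneously yields the two emptiness statements; the bottom half is obtained from the dual bijection $\gamma_z$ by the symmetric argument.
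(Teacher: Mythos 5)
Your reductions are sound as far as they go: the index-monotonicity of every nonzero element, the identity $\ran(\varphi)\cap M_1=M_1\varphi\cap M_1$, and the bijectivity of $\varphi\mapsto i\varphi$ from the $n$ nonzero elements onto $X$ for a cyclic $i$ (and its dual at a point of $M_k$) are all correct consequences of $|S|=n+1$ and Lemma~\ref{lem_dom_ran}. But the step you yourself flag as the crux --- ruling out a ``split'' $\varphi$ with $a\varphi\in M_1$ and $c\varphi\notin M_1$ for some $a,c\in M_1$ --- is left as a plan, and the plan as stated does not go through, for two reasons. First, the ``suitable elements that act within $M_1$'' are not available to you: transitivity of $S$ on $M_1$ only supplies, for given $x,y\in M_1$, some $\sigma\in S$ with $x\sigma=y$; whether such a $\sigma$ maps all of $M_1$ back into $M_1$ is precisely the all-or-nothing statement you are trying to prove, so invoking elements that ``act within $M_1$'' is circular. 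Second, composition does not manufacture a failure of injectivity of $e_i:\varphi\mapsto i\varphi$: if you produce $\sigma\varphi$ and $\tau$ with $i(\sigma\varphi)=i\tau$, the bijectivity of $e_i$ simply forces $\sigma\varphi=\tau$ as elements of $S$ --- a consistent conclusion, not a contradiction. To contradict the count you would need strictly more than $m_1$ pairwise distinct nonzero elements sending $i$ into $M_1$, and nothing in your construction yields such an overcount; my own attempts to complete it run into the same circularity.

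For comparison, the paper closes exactly this hole by a different mechanism: powers of $\varphi$ together with the cycle-chain decomposition. By monotonicity, a point of $M_1$ whose $\varphi$-orbit leaves $M_1$ can never return, hence lies in a chain of $\varphi$, so some power $\varphi^l$ is undefined at a point of $M_1$; the assumed arrow $i\to j$ inside $M_1$ guarantees (via a cycle, or a strictly longer chain tail through $i$) that this $\varphi^l$ can be chosen nonzero, contradicting Lemma~\ref{lem_dom_ran} applied to $\varphi^l\in S$. The same powers argument disposes of the mixed cases in the ``moreover'' clause, which in your scheme again hang on the unproved constant-shift claim. A secondary inaccuracy: your assertion that shift $0$ plus the count forces $\varphi$ to be a permutation of $X$ is not justified at this stage --- a shift-$0$ element could a priori be undefined on part of a middle class $M_t$, $1<t<k$, and totality is only obtained later in the induction proving Theorem~\ref{th_classif_semitr_min_card}; fortunately the lemma needs only $M_1\varphi=M_1$ and $M_k\varphi=M_k$, which do follow from shift $0$ directly via Lemma~\ref{lem_dom_ran} and injectivity. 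So the overall architecture is salvageable, but the central all-or-nothing step needs an actual argument, and the paper's powers-plus-chains device is the natural one to import.
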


\begin{proof}
Suppose that $M_1\varphi\neq M_1$ and that
$M_1\cap\ran(\varphi)\neq\varnothing$. Then there are $i,j\in M_1$
such that $\varphi$ has an arrow from $i$ to $j$. If
$i\in\dom(\varphi^t)$ for all $t$ then there is an $h$ in $M_1$
such that $f=h\varphi\notin M_1$. Then the cycle-chain
decomposition of $\varphi$ has a cycle $(i,j,\ldots)$ and a chain
$(\ldots,h,f,\ldots]$. Otherwise,
the cycle-chain decomposition of $\varphi$ has a chain
$(i,j,\ldots,g,\ldots]$, where $g\in M_s$ with $s>1$. This means
that there is some power $\varphi^l$ such that either $h$ or $j$
does not belong to its domain while $i$ still does. This
contradicts Lemma~\ref{lem_dom_ran}. Similarly, one shows that if
$\varphi$ has an arrow from $i$ to $j$ with $i,j\in M_k$ then
$M_k\varphi=M_k$. Finally, each of the cases $M_1 \varphi =M_1$
and $M_k\cap\dom(\varphi)=\varnothing$; and $M_k\varphi= M_k$ and
$M_1\cap\ran(\varphi)=\varnothing$ is impossible, since otherwise
there would exist a power  $\varphi^l$ such that it is non-zero
with either $M_k$ not in its range, or $M_1$ not in its domain,
respectively, which again contradicts Lemma~\ref{lem_dom_ran}.
\end{proof}

\begin{lemma}\label{lem_special_elem} There is an element
$\varphi\in S$ with $\dom(\varphi)=M_1$ and $\ran(\varphi)=M_k$
(thus, in particular, $m_1=m_k$).
\end{lemma}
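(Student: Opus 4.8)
The plan is to produce the required element as a nonzero element of minimal rank, after first extracting the coarse structure forced by the minimal cardinality.

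First I would locate a nonzero element that drops $M_1$ all the way onto $M_k$ in one step. Since every element of $M_1$ is cyclic and $M_1>M_k$, there are $i\in M_1$ and $\psi\in S$ with $i\psi\in M_k$. As $i\psi\notin M_1$, the class $M_1$ is not preserved by $\psi$, so Lemma~\ref{lem_inv_action} gives $M_k\cap\dom(\psi)=\emptyset$; hence $i\psi\notin\dom(\psi)$ and $i\notin\dom(\psi^2)$. Were $\psi^2$ nonzero, Lemma~\ref{lem_dom_ran} would force $M_1\subseteq\dom(\psi^2)$, a contradiction. Thus $\psi^2=0$, i.e. $\dom(\psi)\cap\ran(\psi)=\emptyset$.

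Next I would exploit $|S|=n+1$ by sharpening the count in the proof of Lemma~\ref{lem_dom_ran}: for a fixed cyclic $i$ the map $\varphi\mapsto i\varphi$ is a bijection from $S\setminus\{0\}$ onto $X$. Reading this off, the elements with $M_1\varphi=M_1$ are exactly the $m_1$ elements sending $i$ into $M_1$; since restriction to $M_1$ is injective and lands in $\mathrm{Sym}(M_1)$, they form a group $G$ acting regularly on $M_1$, and a short computation with the bijection shows that its identity acts as $\mathrm{id}_X$, so every element of $G$ has full domain. Running the symmetric bijection $\varphi\mapsto y\varphi^{-1}$ for a fixed $y\in M_k$ identifies the very same group with the elements whose preimage of $y$ lies in $M_k$, of which there are $m_k$; hence $m_1=m_k=:m$.

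Now let $\chi$ be a nonzero element of minimal rank $r_0$. I would first check $\chi$ is descending (i.e. $M_1\chi\neq M_1$): otherwise $M_k\subseteq\ran(\chi)$, and since $\chi\psi\neq0$ (their domain and range meet on $M_1$) with $\rank(\chi\psi)\le\rank(\chi)=r_0$, minimality forces $\rank(\chi\psi)=r_0$, hence $\ran(\chi)\subseteq\dom(\psi)$ and so $M_k\subseteq\dom(\psi)$, contradicting $M_k\cap\dom(\psi)=\emptyset$. The same minimality applied to $\chi^2$ gives $\chi^2=0$, because a nonzero $\chi^2$ would force $\ran(\chi)\subseteq\dom(\chi)$, impossible as $M_k\subseteq\ran(\chi)$ but $M_k\cap\dom(\chi)=\emptyset$. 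Since $M_1\subseteq\dom(\chi)$ and $M_k\subseteq\ran(\chi)$, we have $r_0\ge m$, and the instant we know $r_0=m$ the equalities $\dom(\chi)=M_1$ and $\ran(\chi)=M_k$ (so in particular $m_1=m_k$) follow purely by cardinality, which is the assertion of the lemma.

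The main obstacle is exactly this last step, $r_0=m$: ruling out that $\dom(\chi)$ meets an intermediate class $M_2,\dots,M_{k-1}$. Here I would again use the bijection $\varphi\mapsto i\varphi$ to identify $S\setminus\{0\}$ with $X$, writing $\varphi_p$ for the element with $i\varphi_p=p$, and read off the product as $\varphi_p\varphi_q=\varphi_{p\varphi_q}$ when $p\in\dom(\varphi_q)$ and $0$ otherwise; this turns $S$ into a transparent combinatorial object on which the minimal-rank elements form a null ideal that is permuted freely by $G$. On this object I expect one can argue—by descending induction on the number of classes $k$, or by taking the top nonzero power of a nearest-descent generator sending $i$ into $M_2$—that a minimal-rank element must collapse each class by exactly one step, which drives its domain down to $M_1$. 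Verifying this uniform one-step collapse is, I expect, the delicate part of the proof.
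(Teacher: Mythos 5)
There is a genuine gap, and you flag it yourself: the main assertion of the lemma --- the existence of an element with $\dom(\varphi)=M_1$ and $\ran(\varphi)=M_k$ --- is never actually proved. Your first three paragraphs are sound: the element $\psi$ with an arrow from $M_1$ into $M_k$ and $\psi^2=0$ is correctly produced; the bijection $\varphi\mapsto i\varphi$ from $S\setminus\{0\}$ onto $X$ is a legitimate sharpening of the counting in Lemma~\ref{lem_dom_ran}, and your two-sided count does give $m_1=m_k$ rigorously (a genuinely different route from the paper, where $m_1=m_k$ falls out only as a corollary of the existence statement); and the properties of a minimal-rank element $\chi$ (descending, $\chi^2=0$, $M_1\subseteq\dom(\chi)$, $M_k\subseteq\ran(\chi)$, hence $r_0\geq m$) are all correctly derived. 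But the crux --- that $\dom(\chi)$ meets no intermediate class $M_2,\dots,M_{k-1}$, i.e.\ $r_0=m$ --- is replaced by a plan hedged with ``I expect,'' and the plan (descending induction, or powers of a nearest-descent generator, or the asserted ``null ideal permuted freely by $G$'') is not carried out. Since $r_0=m$ is essentially equivalent to the lemma itself, the proposal proves the parenthetical remark but not the lemma.

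It is worth noting how close you were: your $\psi$ from the first paragraph is exactly the starting point of the paper's proof, and the missing content is a two-step composition argument applied to it rather than to a minimal-rank element. The paper shows first that \emph{every} arrow of $\psi$ from $M_1$ lands in $M_k$: an offending arrow from $x\in M_1$ to $y\in M_l$ ($1<l<k$) is killed by composing with an element $\rho$ sending $y$ into $M_k$; Lemma~\ref{lem_inv_action} forces $M_k\cap\dom(\rho)=\emptyset$, so $\psi\rho$ is nonzero (it moves $x$) yet $i\notin\dom(\psi\rho)$, contradicting Lemma~\ref{lem_dom_ran}. The same trick then shows $\psi$ has no arrows from intermediate classes at all, whence $\dom(\psi)=M_1$ and, by Lemma~\ref{lem_dom_ran} again, $\ran(\psi)=M_k$. (The paper also handles, via its Case~2, elements whose $M_1$-arrows do not all reach $M_k$, by first composing to create an $M_1\to M_k$ arrow --- a case your construction of $\psi$ already avoids.) So the gap is not a wrong approach but a missing idea: the contradiction-by-composition that converts ``some arrow reaches $M_k$'' into ``the domain is exactly $M_1$.'' Without it, your argument establishes strictly less than the statement.
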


\begin{proof}
Let $\varphi\in S$. Suppose $\varphi$ has an arrow from $i$ to $j$
for some $i\in M_1$ and $j\in M_k$. Then Lemma \ref{lem_inv_action}
implies that $M_1\cap\ran(\varphi)=\varnothing$ and
$M_k\cap\dom(\varphi)=\varnothing$. We will show that
all arrows from elements of $M_1$ in $\varphi$ go to elements of
$M_k$.  Suppose that this is not the case and that $\varphi$ has an arrow
from $x$ to $y$ with $x\in M_1$ and $y\in M_l$ with $1<l<k$.
Consider some $\psi\in S$ such that $y\in\dom(\psi)$ and $y\psi=j$. It
exists by semitransitivity and construction of the sets $M_i$.
Since $M_k\psi\neq M_k$ we have $M_k\cap\dom(\varphi)=\varnothing$
by Lemma \ref{lem_inv_action}. Thus $j\notin\dom(\psi)$. Then $\varphi\psi\in S$ with
$i\not\in\dom(\varphi\psi)$ and $x\in \dom(\varphi\psi)$, which is
impossible by Lemma~\ref{lem_dom_ran}. Similarly, one shows that
all arrows to elements of $M_k$ in $\varphi$ go from elements of
$M_1$.

Consider an arbitrary $\varphi\in S$. In view of the previous
paragraph, we have only to consider the following two cases.

{\em Case 1.} Suppose all arrows from $M_1$ in $\varphi$ go to
$M_k$. In this case we  show that in fact $\dom(\varphi)=M_1$,
that is $\varphi$ has no other arrows but those from $M_1$. If
this were not the case, $\varphi$ would have some arrow from $x\in
M_s$ to $y\in M_t$ with $1<s\leq t<k$.
We choose $\psi\in S$ mapping $y$ to $M_k$. It exists by
semitransitivity. Invoking Lemma \ref{lem_inv_action} we get
$M_k\cap\dom(\psi)=\varnothing$ since $M_k\psi\neq M_k$. Now,
$\varphi\psi$ is a non-zero element in $S$ whose domain does not
contain $M_1$, which is impossible by Lemma~\ref{lem_dom_ran}.
This proves that $\varphi$ has the property we are looking for.

{\em Case 2.} Suppose there is an arrow from $M_1$ in $\varphi$
that does not go to $M_k$. Consider some arrow of $\varphi$, from
$i$ to $j$, with $i\in M_1$ and $j\in M_l$, $l<k$. There is
$\psi\in S$ with an arrow from $j$ to $M_k$. Then $\varphi\psi$
has an arrow from $M_1$ to $M_k$, which, in view of the first
paragraph of this proof, implies that all arrows from $M_1$ in
$\varphi\psi$ go to $M_k$. Now Case 1 ensures that $\varphi\psi$
is the required element.
\end{proof}

\begin{proof}[Proof of Theorem~\ref{th_classif_semitr_min_card}]
We apply induction on the number of $r$-classes $k$. Consider
first the case $k=2$. From Lemma~\ref{lem_special_elem} we know
that $m_1=m_2$. Then, from Lemma~\ref{lem_inv_action} it follows
that $S$ has elements of two different types. The first type: the
elements $\varphi$ with $\dom(\varphi)=\ran(\varphi)=X$ such that
$M_1\varphi=M_1$ and $M_2\varphi=M_2$. The second type: the
elements $\varphi$ with $\dom(\varphi)=M_1$ and $M_1\varphi=M_2$.
Since elements of the first type act transitively on $M_1$, there
are at least $m_1$ such elements. Fix some element $\varphi$ of
the second type. Multiplying it with different elements of the
first type we obtain different elements of the second type,
meaning that $S$ has at least $2m_1=n$ non-zero elements. It
follows that the cardinality of the set of the elements of the
first type is $m_1$, and the restrictions to $M_1$ of these
elements form a transitive group of permutations of $M_1$. Let $T$
be a semigroup generated by $\varphi$. We have that the action of
$S$ coincides with the action of  $(G\times T^1)/I$, where the
ideal $I$ consists of all elements $(g,0)$, $g\in G$, which
finishes the proof of the induction base.

Let now $k\geq 3$. We construct a homomorphism $\gamma$ from $S$
to some semitransitive subsemigroup of $\IS(X\setminus M_k)$ with
the $r$-classes $M_1>\dots > M_{k-1}$. Let $\varphi\in S$. If
$M_k\varphi=M_k$ then also $(X\setminus M_k)\varphi=X\setminus
M_k$. In this case we set $\varphi\gamma$ to be equal to the
restriction of $\varphi$ to $X\setminus M_k$. Otherwise we have
$M_k\cap\dom(\varphi)=\varnothing$ (by
Lemma~\ref{lem_inv_action}), meaning that in the cycle-chain
notation each element of $M_k$ occurs at the end of some non-empty
chain. We define the element $\varphi\gamma$ by erasing the last
elements of all chains in $\varphi$. This construction ensures
that $\gamma$ is homomorphic, and that $S\gamma$ is a
semitransitive subsemigroup of $\IS(X\setminus M_k)$ with the
$r$-classes $M_1>\dots > M_{k-1}$. Let us estimate its
cardinality. By Lemma~\ref{lem_special_elem} we know that there is
$\varphi\in S$ with $\dom(\varphi)=M_1$ and $\ran(\varphi)=M_k$.
Besides, we have at least $m_k$ elements $\psi$ such that
$M_k\subseteq \dom(\psi)$ (this follows from
Lemma~\ref{lem_inv_action} and that a group acting transitively on
a $s$-element set is at least of cardinality $s$). Therefore,
considering all possible products $\varphi\psi$, we make sure that
$S$ has at least $m_k$ different elements $\alpha$ with
$\dom(\alpha)=M_1$ and $\ran(\alpha)=M_k$. All these elements, as
well as the zero of $S$, are mapped by $\gamma$ to the zero of
$S\gamma$, implying that $S\gamma$ has at most $n+1-m_k=m_1+\cdots
+m_{k-1}+1=|X\setminus M_k|+1$ elements. It follows that $S\gamma$
is a semitransitive subsemigroup of $\IS(X\setminus M_k)$ of the
minimum cardinality, and the induction assumption can be implied.
It follows that $m_1=\cdots =m_k$, that the action of $S\gamma$ on
$X\setminus M_k$ coincides with the action of  some $(G\times
T^1)/I$, where $G$ is a $m_1$-element group acting transitively on
$M_1$, $T$ is generated by some nilpotent element $\varphi$ all
whose chains are of length $k-1$ and go from $M_1$ through $M_2$,
$\dots$ to $M_{k-1}$. Let $\psi$ be some $\gamma$-preimage of
$\varphi$, and $\overline{T}$ the semigroup generated by it.
Then $S$ contains the subsemigroup $(G\times
\overline{T}^1)/\overline{I}$ with the ideal $\overline{I}$
consisting of all elements $(g,0)$, $g\in G$ ($0$ is the zero of
$S$). Since this semigroup and $S$ are of the same cardinality
$n+1$, we conclude that they must coincide. This completes the
proof.
\end{proof}

The assumption of minimal cardinality in Theorem \ref{th_classif_semitr_min_card}
implies that the cardinality of the sets $M_i$ are all equal. If we only assume
minimality then this is no longer the case as is shown by the following example.

\begin{example}\label{ex:nonminimal_card}
Assume that $X=\{1,2,3\}$ and that $M_1=\{1,2\}$ and $M_2=\{3\}$.
Then $S=\{(1,2)(3),(1)(2)(3),(1,3]2],(2,3]1],0\}$ is a minimal semitransitive
subsemigroup of $\IS(X)$. Its cardinality is equal to $5$, which is greater then $n+1=4$.
\end{example}

\bigskip \bigskip

\noindent{K. Cvetko-Vah, D. Kokol Bukov\v sek, T. Ko\v sir: Department of Mathematics,
University of Ljubljana, Jadranska 19, SI-1000 Ljubljana, Slovenia.\\ e-mail: karin.cvetko@fmf.uni-lj.si,
damjana.kokol@fmf.uni-lj.si,\\ tomaz.kosir@fmf.uni-lj.si.}

\medskip

\noindent{G. Kudryavtseva, Y. Lavrenyuk and A. Oliynyk: Department of Mechanics and Mathematics,
Kyiv Taras Shevchenko University, Volodymyrs'ka str. 64, 01033 Kyiv, Ukraine.\\ e-mail: akudr@univ.kiev.ua,
ylavrenyuk@univ.kiev.ua, olijnyk@univ.kiev.ua.}

\end{document}